\newtheorem{remark}{Remark}
\newtheorem{definition}{Definition}
\newtheorem{theorem}{Theorem}
\newtheorem{corollary}{Corollary}
\newcommand{\ubar}[1]{\text{\b{$#1$}}}
\begin{document}

\title{Fuzzy-Stochastic Partial Differential Equations}

\author[1]{Mohammad Motamed\thanks{motamed@math.unm.edu}}

\affil[1]{Department of Mathematics and Statistics, The University of New Mexico, Albuquerque, USA}
\maketitle

\begin{abstract}
We introduce and study a new class of partial differential equations (PDEs) with hybrid fuzzy-stochastic parameters, coined {\it fuzzy-stochastic PDEs}. Compared to purely stochastic PDEs or purely fuzzy PDEs, fuzzy-stochastic PDEs offer powerful models for accurate representation and propagation of hybrid aleatoric-epistemic uncertainties inevitable in many real-world problems. We will use the level-set representation of fuzzy functions and define the solution to fuzzy-stochastic PDE problems through a corresponding parametric problem, and further present theoretical results on the well-posedness and regularity of such problems. We also propose a numerical strategy for computing output fuzzy-stochastic quantities, such as fuzzy failure probabilities and fuzzy probability distributions. 
We present two numerical examples to compute various fuzzy-stochastic quantities and to demonstrate the applicability of fuzzy-stochastic PDEs to complex engineering problems. 
\end{abstract}

{\bf keywords} 
fuzzy-stochastic partial differential equation, uncertainty quantification, aleatoric uncertainty, epistemic uncertainty, fuzzy probability distribution, fuzzy-stochastic computation


\section{Introduction}
\label{sec:intro}

Two types of uncertainty that are often addressed in the field of uncertainty quantification (UQ) include: 1) {\it aleatoric} uncertainty that arises from inherent randomness or variability in a system, and 2) {\it epistemic} uncertainty that arises from insufficient and/or inaccurate information about a system ; see e.g. \cite{UQ_book:15}. 
Many real-world problems indeed exhibit both types of aleatoric and epistemic uncertainties. A typical example is the dynamic response of composite materials, such as carbon fiber polymers, where uncertainty in material properties and damage parameters has contributions from both types \cite{BM:16}. 
On the one hand, variations in material properties (such as the modulus of elasticity) and the spatial distribution of fiber constituents are of random nature. 
On the other hand, material constants may be either difficult or impossible to measure. Moreover, materials may come from different manufacturers, and there may be large variations in their quality leading to large variations in the experimental measurements. When experiments cannot be performed, material constants must be obtained from the literature, such as handbooks and standards. There may be large disagreement in the literature for the values of these quantities. For example, see \cite{Babuska_Silva:11} that studies large variations in the thermal conductivity of stainless steel AISI 304, based on the data given in various sources \cite{ASM:02,TPRC:70}. Such experimental and literature-based variations, which may be larger than the intrinsic random experimental noise, will introduce epistemic uncertainty. 



A major difficulty that arises in modeling uncertainty in real-world problems is that there is often no clear-cut distinction between aleatoric and epistemic uncertainties. There may be a random quantity whose parameters are partially known, or there may be an epistemically uncertain quantity for which some values are more likely to occur than others. Consequently, it may not be possible to identify different types of uncertainty and represent each type by a different model. 
Such mixture of aleatoric and epistemic uncertainties, referred to as {\it hybrid} uncertainty, may be better represented by a hybrid model obtained by the synthesis of two models, rather than by simply adding them.

There are several approaches to describe hybrid uncertainty in UQ problems. One approach within the framework of {\it imprecise probability} is to synthesize interval analysis \cite{Interval:01,Interval:09} and probability theory and build up interval-valued probability distributions \cite{Weichselberger:2000}. For instance, to an uncertain quantity we may assign a probability distribution with parameters that are represented by closed intervals describing the incomplete knowledge of the parameters. 
This approach constructs a probability-box (or a {\it p-box}) consisting of a family of cumulative distribution functions (CDFs). The left and right envelopes of the family will form a {\it box} and bound the ``unknown'' distribution of the uncertain quantity from above and below. We also refer to a recent approach, known as optimal UQ \cite{OUQ:2013}, where the optimal distribution among the family is targeted and obtained as the candidate for the ``unknown'' distribution. Other related approaches in the framework of imprecise probability include coherent lower and upper previsions \cite{Walley:91}, second-order hierarchical probabilities through Bayesian hierarchical modeling \cite{Gelman_etal:04}, and the Dempster-Shafer theory of belief functions \cite{Shafer:76}. Another approach that goes beyond the framework of probability is to synthesize probability theory and fuzzy set theory \cite{Zadeh:65} and form a hybrid fuzzy probabilistic framework for UQ; see e.g. \cite{Zadeh:84,Moller_Beer:2004,Buckley:06,Couso_etal:14}. In a fuzzy set elements can partially be in the set. Each element is hence assigned a grade (or degree) of membership. This notion can be exploited to represent an epistemically uncertain parameter by a set of nested intervals with different membership degrees. This hybrid approach builds a fuzzy probability distribution that may be regarded as a nested set of p-boxes at different levels of possibility (corresponding to different membership degrees). We refer to \cite{Walley:2000,Dubois_Prade:2010} for a general discussion of the subject.


In this paper we consider the fuzzy probabilistic approach to UQ and introduce and study PDEs with fuzzy-stochastic parameters, coined {\it fuzzy-stochastic PDEs}. Such hybrid PDEs will serve as the underlying mathematical models for physical systems subject to hybrid uncertainties. Compared to purely stochastic PDEs (see e.g. \cite{BNT:07,Motamed_etal:13,Motamed_etal:15}) and purely fuzzy PDEs (see e.g. \cite{FPDE:99,Corveleyn_Rosseel_Vanderwalle:13,FPDE:13}), fuzzy-stochastic PDEs offer powerful tools and models for accurate description and propagation of hybrid uncertainties. We use the level-set representation of fuzzy functions and define the solution to a fuzzy-stochastic PDE problem through a corresponding parametric problem. We further present theoretical results on the existence, uniqueness, and regularity of the solution. 
We develop a numerical approach for computing fuzzy-stochastic quantities of interest (QoIs), such as fuzzy failure probabilities and fuzzy probability distributions, related to the solution of fuzzy-stochastic PDEs. Considering the notion of full interaction between fuzzy variables and incorporating it in the proposed numerical approach, we avoid overestimating the lower and upper bounds of output intervals, a problem known as dependency phenomenon \cite{Interval:09} that may occur in interval arithmetic and fuzzy computations. 
We present two numerical examples. In the first example, we will compute and visualize various types of fuzzy-stochastic QoIs being functionals of the PDE solution. In the second example, we will demonstrate the importance and applicability of fuzzy-stochastic PDEs for an engineering problem in materials science: the response of fiber-reinforced polymers to external forces.

The main contributions of this work include: 1) introducing fuzzy-stochastic PDEs and defining their solution; 2) presenting well-posedness and regularity analysis of such hybrid PDEs; and 3) developing a numerical algorithm for computing fuzzy-stochastic QoIs, taking into account the full interaction between fuzzy variables. 

The rest of the paper is organized as follows. Section \ref{sec:preliminaries} provides the mathematical and computational foundations of fuzzy and fuzzy-stochastic quantities necessary for and relevant to the focus of this work. In Section \ref{sec:SFPDE} we introduce fuzzy-stochastic PDEs, define their solution, and discuss the existence, uniqueness and regularity of the solution. In Section \ref{sec:numerics} we present two numerical examples, followed by a general discussion on the computational cost of fuzzy-stochastic PDE problems. Finally, we summarize conclusions and outline future works in Section \ref{sec:CON}.

\section{Fuzzy and Fuzzy-Stochastic Quantities}
\label{sec:preliminaries}

%
This section provides the mathematical and computational foundations of fuzzy and fuzzy-stochastic quantities. 
Only the concepts relevant to the focus of this work are discussed here. 
We refer to \cite{Dubois_Prade:1980,Klir:06,Moller_Beer:2004,Buckley:06,Couso_etal:14} for a more general description of fuzzy set theory and fuzzy randomness from an engineering point of view.

\vspace{-.1cm}
\subsection{Fuzzy variables}
\label{sec:F1}

Fuzzy sets \cite{Zadeh:65}, or {\it fuzzy variables}, generalize the classical concept of a set, which in this context is referred to as a crisp set. In a crisp set, the membership of an element is given by the characteristic function, taking values either 0 (not a member) or 1 (a member). In a fuzzy set, elements can {\it partially} be in the set. Each element is given a membership degree, ranging from 0 to 1; see Figure \ref{fuzzy_crisp}. 
\begin{figure}[!h]
\vskip -.4cm
  \begin{center}
        \includegraphics[width=0.4\linewidth]{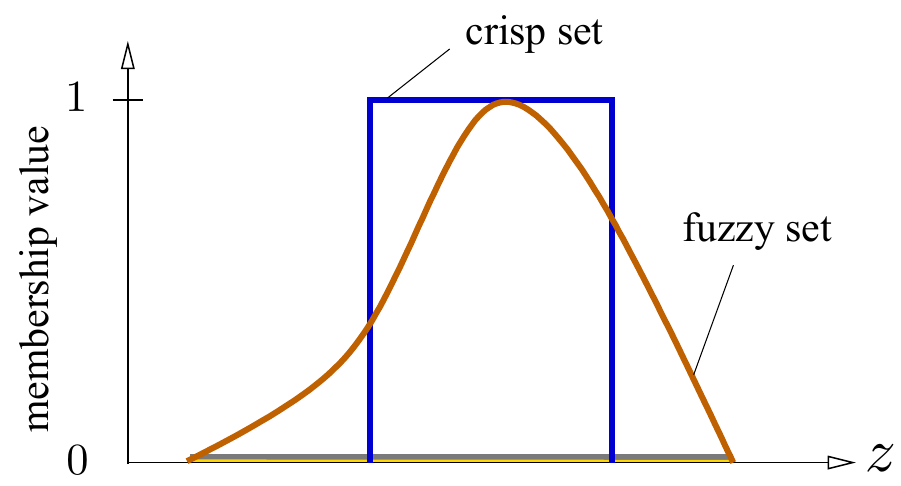}
\vskip -0.15cm        
\caption{The notion of membership in crisp sets and fuzzy sets.}
\label{fuzzy_crisp}
\vskip -.3cm
  \end{center}
\end{figure}

\begin{definition}\label{def_fuzzy_var}
A fuzzy variable is defined by a set of pairs 
$$
\tilde{z} = \{ (z, \mu_{\tilde{z}}(z)), \, \, z \in Z \subset {\mathbb R}, \, \, \mu_{\tilde{z}}: Z \rightarrow [0,1] \},
$$
where $Z$, referred to as the universe, is a non-empty subset of the real line ${\mathbb R}$, and $\mu_{\tilde{z}}$ is a membership function defined on $Z$ with range $[0,1]$. 
The set of all fuzzy variables defined on $Z$ is denoted by ${\mathcal F}(Z)$.
\end{definition}

It is to be noted that in general the range of the membership function may be a subset of nonnegative real numbers whose supremum is finite. However, it is always possible to normalize the range to $[0,1]$. Such fuzzy variables, considered here, are sometimes referred to as normalized fuzzy variables. 
Throughout the present paper, a (normalized) fuzzy variable is denoted by the superimposition of a tilde over a letter.

An important notion in fuzzy set theory is the notion of {\it $\alpha$-cuts} (see e.g. \cite{Dubois_Prade:1980,Klir:06}), which allows one to decompose fuzzy computations into several interval computations. 

\begin{definition}\label{def_alpha_cut} 
The {\it $\alpha$-cuts} of the membership function $\mu_{\tilde{z}}$ of a fuzzy variable $\tilde{z} \in {\mathcal F}(Z)$ are the family $\{ S_{\alpha}^{\tilde{z}} \subset Z,\, \alpha \in [0,1] \}$ 
of its $\alpha$-level sets:
$$
S_0^{\tilde{z}} = {\text{closure}}\{ z \in Z \, | \, \mu_{\tilde{z}}(z) >0 \}, \ \ \ \text{and} \ \ \ 
S_{\alpha}^{\tilde{z}} =  \{ z \in  Z \, | \, \mu_{\tilde{z}}(z) \ge \alpha \}, \ \ \ \forall \alpha \in (0,1].
$$
\end{definition}

In the present work we will need the following assumptions:
\begin{itemize}
\item[(A1)] \ \ The universe $Z \subset {\mathbb R}$ is a bounded interval.

\item[(A2)] \ \ The membership function is upper semicontinuous, i.e.
$$
\limsup_{z \rightarrow z_0}  \mu_{\tilde{z}}(z) \le \mu_{\tilde{z}}(z_0), \qquad \forall z_0 \in Z.
$$

\item[(A3)] \ \ The membership function is quasi-concave, i.e.
$$
\mu_{\tilde{z}} (\lambda \, z_1 + ( 1- \lambda ) \, z_2) \ge \min ( \mu_{\tilde{z}}(z_1), \mu_{\tilde{z}}(z_2)),  \qquad  \forall \, z_1, z_2 \in Z, \qquad \forall \, \lambda \in [0,1].
$$ 
\end{itemize}
We denote by ${\mathcal F}_c(Z)$ the set of all fuzzy variables $\tilde{z} \in {\mathcal F}(Z)$ satisfying assumptions (A1)-(A3). 
Note that the boundedness of $Z$ in (A1) implies that the $\alpha$-cuts are bounded sets. This is a natural assumption for the physical quantities to be represented by fuzzy variables, as such quantities are usually bounded. 
Moreover, assumptions (A2) and (A3) imply that the $\alpha$-cuts are closed and convex sets, respectively. 
Hence, for a fuzzy variable $\tilde{z} \in {\mathcal F}_c(Z)$, the $\alpha$-cuts $S_{\alpha}^{\tilde{z}}$ will be bounded, closed intervals with the inclusion property 
$S_{\alpha_2}^{\tilde{z}} \subset S_{\alpha_1}^{\tilde{z}}$ for $0 \le \alpha_1 \le \alpha_2 \le 1$;  
see Figure \ref{alpha_cut_fuzzy}.

\begin{figure}[!h]
  \begin{center}
        \includegraphics[width=0.36\linewidth]{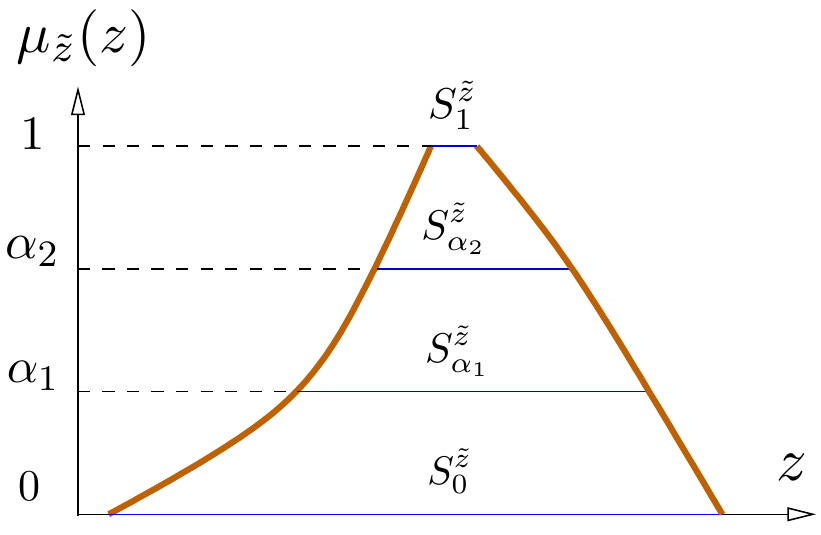}
\caption{The $\alpha$-cuts of $\tilde{z} \in {\mathcal F}(Z)$ satisfying (A1)-(A3) are closed, bounded, nested intervals.}
\label{alpha_cut_fuzzy}
  \end{center}
\end{figure}



We also define the following relational operators for fuzzy variables (see e.g. \cite{Dubois_Prade:1980}), needed for the boundedness and positivity assumption \eqref{Assum2_a} needed in Section \ref{sec:SFPDE}.
\begin{definition}\label{greater_smaller_fuzzy} 
A fuzzy variable $\tilde{z} \in {\mathcal F}(Z)$ is greater than or equal to a real number $a$ if $\mu_{\tilde{z}}(z) = 0$, $\forall z \in Z$ such that $z<a$. This is denoted by $\tilde{z} \ge a$. Similarly, a fuzzy variable $\tilde{z} \in {\mathcal F}(Z)$ is smaller than or equal to a real number $a$ if $\mu_{\tilde{z}}(z) = 0$, $\forall z \in Z$ such that $z>a$. This is denoted by $\tilde{z} \le a$. 
\end{definition}

We note that the above relations can also be expressed in terms of the zero-cut. A fuzzy variable is greater (respectively smaller) than a real number if the real number is smaller (respectively greater) than all points on the zero-cut of the fuzzy variable.  



\medskip
\noindent
{\bf Fuzzy vectors.} 
A fuzzy vector can be considered as the $n$-dimensional generalization of a fuzzy variable, with $n \ge 2$.
\begin{definition}\label{def_fuzzy_vec}
An $n$-dimensional fuzzy vector, witten as $\tilde{\bf z} = (\tilde{z}_1, \dotsc, \tilde{z}_n)$, is a collection of $n$ fuzzy variables 
$\tilde{z}_i = \{ (z_i, \mu_{\tilde{z}_i}(z_i)), \, z_i \in Z_i \subset {\mathbb R}, \, \mu_{\tilde{z}_i}: Z_i \rightarrow [0,1] \}$, with $i = 1, \dotsc, n$. 
A fuzzy vector is represented by a set of pairs 
$$
\tilde{\bf z} = \{ ({\bf z}, \mu_{\tilde{\bf z}}({\bf z})), \, \, {\bf z} \in {\bf Z} \subset {\mathbb R}^n, \, \, \mu_{\tilde{\bf z}}: {\bf Z} \rightarrow [0,1] \}, 
$$
where the universe ${\bf Z}$ is a non-empty subset of the Cartesian product of the one-dimensional universes, i.e. 
${\bf Z}  \subset Z_1 \times \cdots \times Z_n \subset {\mathbb R}^n$, and $\mu_{\tilde{\bf z}}$ is a joint membership function that is obtained from the marginal membership functions $\{ \mu_{\tilde{z}_i} \}_{i=1}^{n}$, based on the interaction of the $n$ fuzzy variables (defined below). 
The set of all fuzzy vectors $\tilde{\bf z}$ on ${\bf Z}$ is denoted by ${\mathcal F}({\bf Z})$.
\end{definition}

Analogous to one-dimensional $\alpha$-cuts we can define $\alpha$-cuts for fuzzy vectors. 
\begin{definition}\label{def_alpha_cut_vec} 
The joint $\alpha$-cuts of the joint membership function $\mu_{\tilde{\bf z}}$ of an $n$-dimensional fuzzy vector $\tilde{\bf z} \in {\mathcal F}({\bf Z})$ are the family $\{ S_{\alpha}^{\tilde{\bf z}} \subset {\bf Z},\, \alpha \in [0,1] \}$ of its joint $\alpha$-level sets:
\begin{equation}\label{joint_alpha-set}
S_0^{\tilde{\bf z}} = {\text{closure}}\{ {\bf z} \in {\bf Z} \, | \, \mu_{\tilde{\bf z}}({\bf z}) >0 \}, \ \ \ \text{and} \ \ \ 
S_{\alpha}^{\tilde{\bf z}} =  \{ {\bf z} \in  {\bf Z} \, | \, \mu_{\tilde{\bf z}}({\bf z}) \ge \alpha \}, \ \ \ \forall \alpha \in (0,1].
\end{equation} 
\end{definition}

Similar to the case of fuzzy variables, we will need the following assumptions:
\begin{itemize}
\item[(A4)] \ \ The universe ${\bf Z} \subset {\mathbb R}^n$ is a bounded, convex set.

\item[(A5)] \ \ The joint membership function is upper semicontinuous.

\item[(A6)] \ \ The joint membership function is quasi-concave.
\end{itemize}
We denote by ${\mathcal F}_c({\bf Z})$ the set of all fuzzy vectors $\tilde{\bf z} \in {\mathcal F}({\bf Z})$ satisfying assumptions (A4)-(A6). 
Note that for a fuzzy vector $\tilde{\bf z} \in {\mathcal F}_c({\bf Z})$, the joint $\alpha$-cuts \eqref{joint_alpha-set} will be compact, convex sets satisfying the inclusion property: 
\begin{equation}\label{inclusion_vectors}
S_{\alpha_2}^{\tilde{\bf z}} \subset S_{\alpha_1}^{\tilde{\bf z}}, \qquad 0 \le \alpha_1 \le \alpha_2 \le 1.
\end{equation}




\noindent
{\bf Interaction.} 
In fuzzy arithmetic it is important to consider the {\it interaction} between fuzzy variables (analogous to the correlation between random variables). 
In general, one can distinguish between three types of interaction and split fuzzy variables into three types: 1) {\it non-interactive} variables; 2) {\it fully interactive} variables; and 3) {\it partially interactive} variables. Interaction can be defined in terms of the notion of $\alpha$-cuts.

\begin{definition}\label{def_noninteractive} 
Consider a fuzzy vector $\tilde{\bf z} \in {\mathcal F}({\bf Z})$ consisting of $n \ge 2$ fuzzy variables $\tilde z_i \in  {\mathcal F}(Z_i)$, with $i = 1, \dotsc, n$, satisfying (A4)-(A6), i.e. $\tilde{\bf z} \in {\mathcal F}_c({\bf Z})$. 
Let $S_{\alpha}^{\tilde{z}_i}$ be the one-dimensional (or marginal) $\alpha$-cut interval corresponding to each fuzzy variable $\tilde z_i$. 
The fuzzy variables $\{ \tilde{z}_i \}_{i=1}^n$ are said to be {\it non-interactive} if their joint $\alpha$-cut $S_{\alpha}^{\tilde{\bf z}}$ is the $n$-dimensional hyperrectangle given by the Cartesian product of $n$ marginal $\alpha$-cuts:
\vskip -.2cm
$$
S_{\alpha}^{\tilde{\bf z}} = S_{\alpha}^{\tilde{z}_1} \times \dotsc \times S_{\alpha}^{\tilde{z}_n} =: \prod_{i=1}^{n} S_{\alpha}^{\tilde{z}_i}, \qquad \ \forall \alpha \in [0,1].
$$
\end{definition}


\begin{definition}\label{def_fullyinteractive} 
The fuzzy variables $\{ \tilde{z}_i \}_{i=1}^n$ in Definition \ref{def_noninteractive} are said to be {\it fully interactive} if their joint $\alpha$-cut $S_{\alpha}^{\tilde{\bf z}}$ is a (possibly non-linear) continuous curve in the hyperrectangle $\prod_{i=1}^n S_{\alpha}^{\tilde{z}_i} \subset {\mathbb R}^n$, satisfying the inclusion property \eqref{inclusion_vectors}.
\end{definition}

It is to be noted that since the joint $\alpha$-cut $S_{\alpha}^{\tilde{\bf z}}$ of fully interactive fuzzy variables is a continuous curve in ${\mathbb R}^n$, there is a bijective mapping between $S_{\alpha}^{\tilde{\bf z}}$ and a one-dimensional closed, bounded interval $I_{\alpha} = [0,L_{\alpha}] \subset {\mathbb R}$, with $L_{\alpha}$ being the Euclidean length of the curve $S_{\alpha}^{\tilde{\bf z}}$. By the arc length parameterization of the curve we can therefore obtain a (possibly non-linear) bijective map $\boldsymbol\varphi_{\alpha} : [0,L_{\alpha}] \rightarrow {\mathbb R}^n$ so that $\boldsymbol\varphi_{\alpha}(s) \in S_{\alpha}^{\tilde{\bf z}}$ for each arc length parameter $s \in I_{\alpha} = [0,L_{\alpha}]$. Such mapping facilitates practical fuzzy computations; see Section \ref{sec:numerics}.

Clearly, unlike the case of non-interactive fuzzy variables for which the inclusion property \eqref{inclusion_vectors} is automatically satisfied, in the case of fully-interactive variables the inclusion property must be imposed, because not every continuous curve in the hyperrectangle satisfies this property. 
A particular type of full interaction that can be easily handled in practical computations may be considered by setting the joint $\alpha$-cut to be the polygonal (i.e. continuous and piecewise linear) curve, given recursively by 
\vskip -.1cm
$$
S_1^{\tilde{\bf z}} = diag( \prod_{i=1}^n S_1^{\tilde{z}_i}),
$$
\vskip -.3cm
$$
S_{\alpha_{j}}^{\tilde{\bf z}} = S_{\alpha_{j+1}}^{\tilde{\bf z}} \bigcup \ diag( \prod_{i=1}^n [ S_{\alpha_{j}}^{\tilde{z}_i} \setminus S_{\alpha_{j+1}}^{\tilde{z}_i} ]_l )
\bigcup \ diag( \prod_{i=1}^n [ S_{\alpha_{j}}^{\tilde{z}_i} \setminus S_{\alpha_{j+1}}^{\tilde{z}_i} ]_r ), 
 \ \  0 \le \alpha_j < \alpha_{j+1} \le 1.
$$
Here, $diag(\prod_{i=1}^n S^{i})$ with $S^i=[\ubar{S}^i, \bar{S}^i]$ denotes the main space diagonal of the hyperrectangle $S=\prod_{i=1}^n S^{i}$, i.e. the line segment between the vertices $(\ubar{S}^1, \dotsc, \ubar{S}^n)$ and $(\bar{S}^1, \dotsc, \bar{S}^n)$, and the hyperrectangles $[ S_{\alpha_{j}}^{\tilde{z}_i} \setminus S_{\alpha_{j+1}}^{\tilde{z}_i} ]_l$ and $[ S_{\alpha_{j}}^{\tilde{z}_i} \setminus S_{\alpha_{j+1}}^{\tilde{z}_i} ]_r$ are the left and right portions of the set $S_{\alpha_{j}}^{\tilde{z}_i} \setminus S_{\alpha_{j+1}}^{\tilde{z}_i}$, respectively; see Figure \ref{interactive_alpha_cut}. 
We notice that this setting ensures that the inclusion property \eqref{inclusion_vectors} holds.
\begin{figure}[!h]
\vspace{-.2cm}
  \begin{center}
        \includegraphics[width=0.4\linewidth]{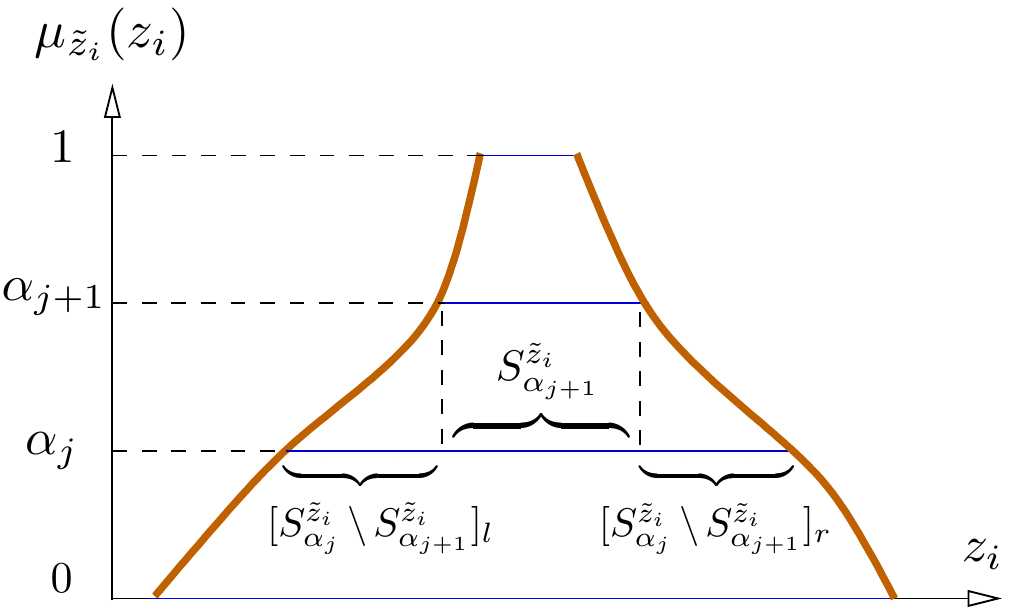}
\caption{The intervals used in the definition of the joint $\alpha$-cuts of fully interactive fuzzy variables by polygonal nested curves. We have 
$S_{\alpha_{j}}^{\tilde{z}_i} = S_{\alpha_{j+1}}^{\tilde{z}_i} \bigcup \
[S_{\alpha_{j}}^{\tilde{z}_i} \setminus S_{\alpha_{j+1}}^{\tilde{z}_i} ]_l 
\bigcup \
[S_{\alpha_{j}}^{\tilde{z}_i} \setminus S_{\alpha_{j+1}}^{\tilde{z}_i} ]_r$ with $\alpha_j < \alpha_{j+1}$.}
\label{interactive_alpha_cut}
  \end{center}
  \vskip -.1cm
\end{figure}

\begin{definition}\label{def_partiallyinteractive} 
The fuzzy variables $\{ \tilde{z}_i \}_{i=1}^n$ in Definition \ref{def_noninteractive} are said to be {\it partially interactive} if they are neither non-interactive nor fully interactive. 
\end{definition}

Due to the assumptions (A4)-(A6), Definition \ref{def_partiallyinteractive} implies that the joint $\alpha$-cut $S_{\alpha}^{\tilde{\bf z}}$ of partially interactive fuzzy variables is a strict subset of the Cartesian product of the marginal $\alpha$-cuts, that is, $S_{\alpha}^{\tilde{\bf z}} \subsetneq \prod_{i=1}^n S_{\alpha}^{\tilde{z}_i}$, $\, \forall \alpha \in [0,1]$. Moreover, it cannot be mapped into a one-dimensional interval. In practice the joint $\alpha$-cuts of partially interactive variables are geometrically more complicated than those of non-interactive and fully interactive variables. Efficient fuzzy computation in the case of partial interaction is a challenging task due to the need for solving constrained optimization problems over complicated joint $\alpha$-cuts. We refer to \cite{Scheerlinck_etal:14} for the treatment of particular types of partial interaction using triangular norms. 

Importantly, partial interaction does not often occur in hybrid fuzzy-stochastic modeling, and hence we may not need to treat this case in a hybrid fuzzy-stochastic framework. Full interaction may however occur in hybrid fuzzy-stochastic models. An example is when the uncertain parameter is characterized by a random variable with fuzzy statistical moments using a set of measurement data. Since the moments are directly related to each other, i.e. higher moments are obtained from lower moments, the fuzzy moments may be fully interactive; see Section \ref{sec:numerics2}. Another example, which may occur even in a pure fuzzy framework, is when we perform mathematical operations on two functions with the same fuzzy arguments; see Section \ref{sec:F3} for the definition of fuzzy functions. In this case the outputs of the two fuzzy functions are fully interactive.



Although in a hybrid framework we often face non-interactive and/or fully interactive fuzzy variables, we note that the mathematical definitions and results in the present paper are independent of the type of interaction between fuzzy variables. 
In the rest of the paper whenever the type of interaction between fuzzy variables is not specified, the fuzzy variables are understood as general fuzzy variables in ${\mathcal F}({\bf Z})$.

\subsection{Fuzzy functions}
\label{sec:F3}

A fuzzy function is a generalization of the concept of a classical function. A classical function is a mapping from its domain of definition into its range. 
There are various generalizations in the literature on fuzzy calculus; see e.g. \cite{Dubois_Prade:1980, Moller_Beer:2004} and the references therein. 
Here, we consider only two cases: 1) a crisp map with fuzzy arguments, and 2) a crisp map with both fuzzy and nonfuzzy arguments. 

%
\begin{definition}\label{fuzzy_fcn_def} 
Consider a function $u: {\bf Z} \rightarrow V$ mapping every element of its domain ${\bf Z}\subset {\mathbb R}^n$ into an element of its range $V = u({\bf Z}) \subset {\mathbb R}$. 
%
Let further $\tilde{\bf z} \in {\mathcal F}({\bf Z})$ be a fuzzy vector with a joint membership function $\mu_{\tilde{\bf z}}: {\bf Z} \rightarrow [0,1]$. A function $u$ of $\tilde{\bf z}$, referred to as a fuzzy function, is a mapping $u: {\mathcal F}({\bf Z}) \rightarrow {\mathcal F}(V)$, 
so that 
$$
\tilde u := u(\tilde{\bf z}) = \{ ( v, \mu_{\tilde{u}}(v) ), \, v \in V=u({\bf Z}) \} \in {\mathcal F}(V), \qquad \tilde{\bf z} \in {\mathcal F}({\bf Z}),
$$ 
is a fuzzy variable with the membership function $\mu_{\tilde{u}}$ given by the {\it generalized extension principle} \cite{Fuller:04,Fuller:14}:
\begin{equation}\label{joint_output}
\mu_{\tilde{u}}(v) =\left\{ \begin{array}{l l}
\sup_{{\bf z}= u^{-1}(v)}  \, \mu_{\tilde{\bf z}}({\bf z}) & \qquad u^{-1}(v) \neq \emptyset \\
0 &  \qquad u^{-1}(v) = \emptyset
\end{array} \right.,
\qquad \ \ \ \forall v \in V.
\end{equation}
Here, $u^{-1}(v)$ is the inverse image of $v = u({\bf z}) \in V$ 
and $\emptyset$ is the empty set. 
\end{definition}
%

Crucially, the generalized extension principle \eqref{joint_output} uses the general form of the input joint membership function $\mu_{\tilde{\bf z}}$ and hence is valid for both non-interactive and interactive input fuzzy variables. 

\begin{remark}
This notion of fuzzy functions was originally introduced by Zadeh \cite{Zadeh:65,Zadeh:75} for non-interactive input fuzzy variables, where $\mu_{\tilde{\bf z}}$ is given by 
the minimum of the marginal memberships 
$\mu_{\tilde{\bf z}}({\bf z}) = \min (\mu_{\tilde{z}_1}(z_1), \dotsc, \mu_{\tilde{z}_n}(z_n)), \ \forall {\bf z} = (z_1, \dotsc, z_n) \in {\bf Z}$, 
and the generalized extension principle \eqref{joint_output} reduces to Zadeh's sup-min extension principle. 
In \cite{Dubois_Prade:1980} this type of mapping is called ``fuzzy extension of a nonfuzzy function''. 
\end{remark}

%
%
%
%

In order to define fuzzy and fuzzy-stochastic fields that appear in the study of fuzzy-stochastic PDEs, we further need to consider crisp maps with both fuzzy arguments and nonfuzzy arguments, such as spatial, temporal, and random variables. For this purpose, we introduce the notion of fuzzy Banach space-valued functions.
%

\begin{definition}\label{Sobolev_fuzzy_fcn_def} 
Consider a real-valued function $u: {\bf P} \times {\bf Z} \rightarrow V$, mapping every element of its domain ${\bf P} \times {\bf Z}$, with ${\bf P} \subset {\mathbb R}^p$ and ${\bf Z} \subset {\mathbb R}^n$, to an element of its range $V = u({\bf P} \times {\bf Z}) \subset {\mathbb R}$. 
%
%
Associate with $u$ a mapping $u: {\bf Z} \rightarrow B({\bf P})$, defined by 
$$
[u({\bf z})] ({\bf p}) := u({\bf p}, {\bf z}), \qquad {\bf p} \in {\bf P}, \ \  {\bf z} \in {\bf Z}, 
$$
where $B({\bf P})$ is a metric space of functions on ${\bf P}$. 
Let $\tilde{\bf z} \in {\mathcal F}({\bf Z})$ be a fuzzy vector with a joint membership function $\mu_{\tilde{\bf z}}: {\bf Z} \rightarrow [0,1]$. 
A function $u$ of ${\bf p} \in {\bf P}$ and $\tilde{\bf z}\in {\mathcal F}({\bf Z})$, 
written as
\begin{equation}\label{Fuzzy_Sobolev_fcn}
[u (\tilde{\bf z})]  ({\bf p}) := u({\bf p}, \tilde{\bf z}), \qquad  {\bf p} \in {\bf P}, \qquad \tilde{\bf z}\in {\mathcal F}({\bf Z}),
\end{equation}
is defined by a family of fuzzy variables $\{ \tilde{u}({\bf p}),  {\bf p} \in {\bf P} \}$. Each element of this set is a fuzzy variable $\tilde{u}({\bf p}) := [u (\tilde{\bf z})]  ({\bf p})$ 
corresponding to a fixed ${\bf p} \in {\bf P}$, given by 
$$
\tilde{u} ({\bf p}) =
 \{ \left( v , \, \mu_{\tilde{u}({\bf p})}( v ) \right), \, v \in u( {\bf p} ,{\mathbf Z}) \subset {\mathbb R}\}, \qquad
\forall {\bf p} \in {\bf P},
$$
with the membership function $\mu_{\tilde{u}({\bf p})}$ given by the generalized extension principle \eqref{joint_output}. 
The restriction of \eqref{Fuzzy_Sobolev_fcn} to ${\bf Z}$ is $u: {\bf Z} \rightarrow B({\bf P})$. The function \eqref{Fuzzy_Sobolev_fcn} may then be viewed as a fuzzy function taking values in a metric space. If the metric space $B({\bf P})$ is a Banach space, we call such a function a fuzzy Banach space-valued function, denoted by the mapping 
$u: {\mathcal F}({\bf Z}) \rightarrow {\mathcal F}(B({\bf P}))$.
\end{definition}



We note that Sobolev and Hilbert spaces of functions are specific Banach spaces that we deal with in the context of fuzzy-stochastic PDEs, where functions with temporal/spatial, random, and fuzzy arguments appear as the coefficients, data, and solutions to the PDE problems.


\begin{remark} 
A fuzzy Banach space-valued function is closely related to a fuzzy map with nonfuzzy arguments discussed in \cite{Moller_Beer:2004}. A fuzzy mapping on the nonfuzzy variables ${\bf p}$, denoted by $\tilde{u}({\bf p})$, may be formulated as a crisp mapping $u({\bf p}, \tilde{\bf z})$ on the nonfuzzy variables ${\bf p}$ and a fuzzy vector $\tilde{\bf z}$ referred to as ``fuzzy bunch parameters'', i.e.  $\tilde{u}({\bf p}) = u({\bf p}, \tilde{\bf z})$. In \cite{Moller_Beer:2004} this is called the bunch parameter representation of fuzzy functions. Fuzzy Banach space-valued functions are also related to ``fuzzifying functions'' discussed in \cite{Dubois_Prade:1980}.
\end{remark}


\noindent
{\bf Computation of fuzzy functions.} 
The computation of a fuzzy function $\tilde{u}$ amounts to the computation of its output membership function $\mu_{\tilde{u}}$. 
Computing $\mu_{\tilde{u}}(v)$ for all $v = u({\bf z}) \in V$ by a direct application of the generalized extension principle \eqref{joint_output} can be quite complicated and numerically cumbersome, as there is no efficient method to evaluate the supremum of $\mu_{\tilde{\bf z}}({\bf z})$ over all ${\bf z}$ for which $u({\bf z}) = v$. The computations can substantially be simplified using the $\alpha$-cut representation of $\mu_{\tilde{u}}$, thanks to the following important result, referred to as the {\it function-set identity} \cite{Fuller:04,Fuller:14}, extending the earlier work of Nguyen \cite{Nguyen:78}.
\begin{theorem} \label{fuzzy_set_thm} (Function-set identity \cite{Fuller:04,Fuller:14}) 
Let $\tilde{\bf z} \in {\mathcal F}({\bf Z})$ be a fuzzy vector with a joint membership function $\mu_{\tilde{\bf z}}: {\bf Z} \rightarrow [0,1]$ and corresponding joint $\alpha$-cuts $S_{\alpha}^{\tilde{\bf z}}$, satisfying the assumptions (A4)-(A6), i.e. $\tilde{\bf z} \in {\mathcal F}_c({\bf Z})$. 
Let further $u: {\bf Z} \rightarrow V$ be a continuous map, where $V = u({\bf Z}) \subset {\mathbb R}$. Then the $\alpha$-cuts $S_{\alpha}^{\tilde{u}}$ corresponding to the output membership function $\mu_{\tilde{u}}$ of the fuzzy function $\tilde u = u(\tilde{\bf z}) \in {\mathcal F}(V)$ are given by:
\begin{equation}\label{function_set1}
S_{\alpha}^{\tilde{u}} = u(S_{\alpha}^{\tilde{\bf z}}) = [ \min_{{\bf z} \in S_{\alpha}^{\tilde{\bf z}}} u({\bf z}), \ \max_{{\bf z} \in S_{\alpha}^{\tilde{\bf z}}} u({\bf z}) ], \qquad \forall \alpha \in [0,1].
\end{equation}
\end{theorem}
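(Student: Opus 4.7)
\smallskip
\noindent
\textbf{Proof plan.} The plan is to split the argument into (i) the set identity $S_\alpha^{\tilde u} = u(S_\alpha^{\tilde{\bf z}})$ and (ii) the identification of $u(S_\alpha^{\tilde{\bf z}})$ with the closed bounded interval $[\min u,\ \max u]$ over $S_\alpha^{\tilde{\bf z}}$. Part (i) is the substantive step; part (ii) is a short topological consequence of (A4)--(A6).

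First I would handle the easy inclusion $u(S_\alpha^{\tilde{\bf z}}) \subseteq S_\alpha^{\tilde u}$: if ${\bf z} \in S_\alpha^{\tilde{\bf z}}$ and $\mathsf v = u({\bf z})$, the extension principle \eqref{joint_output} gives $\mu_{\tilde u}(\mathsf v) \ge \mu_{\tilde{\bf z}}({\bf z}) \ge \alpha$, so $\mathsf v \in S_\alpha^{\tilde u}$. For the reverse inclusion with $\alpha \in (0,1]$, the crux is to promote the supremum in \eqref{joint_output} to an attained maximum. Fix $\mathsf v \in S_\alpha^{\tilde u}$; then $u^{-1}(\mathsf v) \cap {\bf Z}$ is nonempty and supports a maximizing sequence $\{{\bf z}_k\}$ with $\mu_{\tilde{\bf z}}({\bf z}_k) \to \sup \ge \alpha$. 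The boundedness of ${\bf Z}$ from (A4) yields a convergent subsequence ${\bf z}_{k_j} \to {\bf z}_*$; continuity of $u$ gives $u({\bf z}_*) = \mathsf v$, while upper semicontinuity (A5) forces
\[
\mu_{\tilde{\bf z}}({\bf z}_*) \ge \limsup_{j \to \infty} \mu_{\tilde{\bf z}}({\bf z}_{k_j}) \ge \alpha,
\]
hence ${\bf z}_* \in S_\alpha^{\tilde{\bf z}}$ and $\mathsf v = u({\bf z}_*) \in u(S_\alpha^{\tilde{\bf z}})$. The boundary case $\alpha = 0$ is handled by first noting that $\{\mathsf v \in V : \mu_{\tilde u}(\mathsf v) > 0\} = u(\{{\bf z} \in {\bf Z} : \mu_{\tilde{\bf z}}({\bf z}) > 0\})$ directly from \eqref{joint_output}; then taking closures and using continuity of $u$ together with compactness of $S_0^{\tilde{\bf z}}$ yields $S_0^{\tilde u} = u(S_0^{\tilde{\bf z}})$.

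For part (ii), under (A4)--(A6) each $S_\alpha^{\tilde{\bf z}}$ is compact and convex, hence connected. Its continuous image under $u$ is therefore a compact connected subset of $\mathbb R$, i.e.\ a closed bounded interval, and the extrema are attained on $S_\alpha^{\tilde{\bf z}}$, giving
\[
u(S_\alpha^{\tilde{\bf z}}) = \bigl[\min_{{\bf z} \in S_\alpha^{\tilde{\bf z}}} u({\bf z}),\ \max_{{\bf z} \in S_\alpha^{\tilde{\bf z}}} u({\bf z})\bigr].
\]
I expect the main obstacle to be the attainment step in part (i): one must combine boundedness from (A4), upper semicontinuity from (A5), and the closedness of the $\alpha$-cuts to prevent the limit point ${\bf z}_*$ from drifting to a location where $\mu_{\tilde{\bf z}}$ drops below $\alpha$. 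Everything else is essentially Weierstrass/Heine--Borel bookkeeping.
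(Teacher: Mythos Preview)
Your argument is sound. The paper, however, does not supply its own proof of this theorem: it is stated as a known result with attribution to Full\'er et al.\ \cite{Fuller:04,Fuller:14} (extending Nguyen \cite{Nguyen:78}), and the paper only records the two hypotheses needed---continuity of $u$ and assumptions (A4)--(A6) on $\tilde{\bf z}$---before moving on to use the identity. So there is nothing in the paper to compare your proof against; you have filled in what the paper deliberately delegates to the literature.

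One small point worth tightening in your attainment step: (A4) gives boundedness of ${\bf Z}$ but not closedness, so you should justify that the subsequential limit ${\bf z}_*$ actually lies in the domain where $u$ and $\mu_{\tilde{\bf z}}$ are defined. This is easily done: since $\sup \ge \alpha > 0$, you may choose the maximizing sequence so that $\mu_{\tilde{\bf z}}({\bf z}_k) \ge \alpha/2$ for all $k$, whence all ${\bf z}_k$ lie in the compact set $S_{\alpha/2}^{\tilde{\bf z}}$, and therefore so does ${\bf z}_*$. With that patch the proof is complete.
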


It is to be noted that two conditions must be satisfied for \eqref{function_set1} to hold: 
1) the map $u: {\bf Z} \rightarrow V$ is continuous, and 
2) the fuzzy input vector $\tilde{\bf z} \in {\mathcal F}({\bf Z})$ satisfies the assumptions (A4)-(A6). Under these two conditions, the $\alpha$-cuts $S_{\alpha}^{\tilde{u}}$ will be compact intervals given by \eqref{function_set1}. The continuity assumption holds when, for instance, the function $\tilde{u}$ is the solution to a differential equation under appropriate assumptions on the data. This important observation will be later utilized for the analysis and computation of fuzzy-stochastic PDEs in this paper.

Crucially, Theorem \ref{fuzzy_set_thm} allows us to decompose fuzzy computations into several interval computations. 
%
Motivated by this, we present a numerical approach, outlined in Algorithm \ref{ALG_fuzzy_functions}, for computing fuzzy functions.



\begin{algorithm}[!ht]
\caption{Computation of fuzzy functions}
\label{ALG_fuzzy_functions}
\begin{algorithmic} 
\medskip
\STATE {\bf 0.} Given a fuzzy vector $\tilde{\bf z} \in {\mathcal F}_c({\bf Z})$ and a continuous map $u: {\bf Z} \rightarrow V$, where $V = u({\bf Z}) \subset {\mathbb R}$, 
the output membership function of the fuzzy function $\tilde u = u(\tilde{\bf z}) \in {\mathcal F}(V)$ is computed as follows.


\medskip
\STATE {\bf 1.} {\it Interaction}: Find the input joint $\alpha$-cut $S_{\alpha}^{\tilde{\bf z}}$ for a fixed $\alpha \in [0,1]$ based on the interaction between the input fuzzy variables.


\medskip
\STATE {\bf 2.} {\it Optimization}: Obtain the output $\alpha$-cut $S_{\alpha}^{\tilde{u}} = [\ubar{u},\bar{u}]$ by solving two global optimization problems: 
$
\ 
\ubar{u} : = \displaystyle\min_{{\bf z} \in S_{\alpha}^{\tilde{\bf z}}} u({\bf z}) \
$
and 
$ 
\ 
\bar{u} : = \displaystyle\max_{{\bf z} \in S_{\alpha}^{\tilde{\bf z}}} u({\bf z}).
$
 
\STATE {\bf 3.} Repeat steps {\bf 1}-{\bf 2} for various $\alpha$ and form the output membership function $\mu_{\tilde{u}}$.

%

\end{algorithmic}
\end{algorithm}

The optimization problems in step 2 can be numerically solved for instance by iterative methods; see e.g. \cite{Incomplete_search:03,Neumaier:04,optimization_fuzzy:12,Scheerlinck_etal:14}. The choice of the method would depend on the dimension and the complexity of $S_{\alpha}^{\tilde{\bf z}}$ and the regularity of $u$ with respect to ${\bf z}$. 

Similarly, the output membership function of a fuzzy Banach space-valued function (see Definition \ref{Sobolev_fuzzy_fcn_def}) may be computed by Algorithm \ref{ALG_fuzzy_functions} pointwise in ${\bf p} \in {\bf P}$. 

\subsection{Fuzzy fields}
\label{sec:F4}

A scalar fuzzy field is a particular type of a fuzzy Banach space-valued function. It is a crisp map with spatial variables and a fuzzy vector as arguments generating an infinite set of fuzzy variables.  
\begin{definition}\label{fuzzy_field_def} 
Let $D \subset {\mathbb R}^d$ be a compact spatial domain, with $d=1,2,3$, and consider a vector of spatial variables ${\bf x} \in D$. Let further $\tilde{\bf z} \in {\mathcal F}({\bf Z})$ be a fuzzy vector on ${\bf Z} \subset {\mathbb R}^n$. A scalar fuzzy field, written as $\tilde u ({\bf x}) = u({\bf x}, \tilde{\bf z}), \, \forall {\bf x} \in D$, is a fuzzy Banach space-valued function 
$u: {\mathcal F}({\bf Z}) \rightarrow  {\mathcal F}(B(D))$, where $B(D)$ is a Banach space on $D$.
\end{definition}

A typical example of $B(D)$ in the context of PDEs is the Hilbert space of functions whose weak derivatives up to order $s \ge 0$ are square integrable, denoted by $H^s(D)$.



\subsection{Fuzzy random variables}
\label{sec:FS1}

Fuzzy random variables were introduced by Kwakernaak \cite{Kwakernaak:78,Kwakernaak:79} as random variables whose values are not real numbers but fuzzy sets. A fuzzy random variable in the sense of Kwakernaak is viewed as a fuzzy perception of an underlying (or {\it original}) real-valued random variable defined on a probability space $(\Omega, \Sigma, P)$, where $\Omega$ is a sample space, $\Sigma$ is a $sigma$-field on $\Omega$, and $P$ is a probability measure assigned to each measurable subset of $\Omega$ and satisfying Kolmogorov's axioms; see e.g. \cite{Grigoriu}. The definition of fuzzy random variables in the sense of Kwakernaak was later formalized in a clear way by Kruse and Meyer \cite{KruseMeyer:87}.


\begin{definition}\label{fuzzy_stock_var_def} 
Let $(\Omega, \Sigma, P)$ be a probability space. Let ${\mathcal F}_c({\mathbb R})$ be the space of all fuzzy variables in ${\mathcal F}({\mathbb R})$ satisfying assumptions (A1)-(A3). Let also ${\mathcal M}_c({\mathbb R})$ be the space of upper semicontinuous, quasi-concave functions ${\mathbb R} \rightarrow [0,1]$ with compact closure of their support; that is, the space of membership functions corresponding to the fuzzy variables in ${\mathcal F}_c({\mathbb R})$.  
A mapping $\tilde{y}: \Omega \rightarrow {\mathcal F}_c({\mathbb R})$, or equivalently $\mu_{\tilde{y}} : \Omega \rightarrow {\mathcal M}_c({\mathbb R})$, is said to be a fuzzy random variable associated with $(\Omega, \Sigma, P)$ if 
$\forall \alpha \in (0,1]$ both $y_{\alpha}^L : \Omega \rightarrow {\mathbb R}$ and $y_{\alpha}^R : \Omega \rightarrow {\mathbb R}$, defined as
\begin{align*}
y_{\alpha}^L (\omega)  &:= \min \{ y \in {\mathbb R} \, | \, [\mu_{\tilde{y}}(\omega)] (y) \ge \alpha \}, \, \qquad
\forall \omega \in \Omega,\\
y_{\alpha}^R (\omega)  &:= \max \{ y \in {\mathbb R} \, | \, [\mu_{\tilde{y}}(\omega)] (y) \ge \alpha \}, \qquad
\forall \omega \in \Omega,
\end{align*}
are real-valued random variables on $(\Omega, \Sigma, P)$.



\end{definition}

It is to be noted that Definition \ref{fuzzy_stock_var_def} implies that every realization of a fuzzy random variable $\tilde{y}(\omega)$, for some $\omega \in \Omega$, is a fuzzy variable, given by a set of pairs 
$$
\tilde{y}(\omega) = \{ (y, [\mu_{\tilde{y}}(\omega)] (y)), \, \, y \in  {\mathbb R}, \, \, \mu_{\tilde{y}}(\omega) \in {\mathcal M}_c({\mathbb R})  \}, \qquad \forall \omega \in \Omega.
$$
%
Or equivalently, we associate with each $\omega \in \Omega$ not a real number as in the case of an ordinary random variable, but a membership function $\mu_{\tilde{y}}(\omega)$ which is an element of ${\mathcal M}_c({\mathbb R})$. We also note that the requirement that $y_{\alpha}^L$ and $y_{\alpha}^R$ are real-valued random variables defined on $(\Omega, \Sigma, P)$ imposes a measurability condition on the map $\tilde{y}$.

The distribution of a fuzzy random variable $\tilde{y}$ in the sense of Kwakernaak can then be derived from a fuzzy perception of the distribution of the ``original'' random variables through Zadeh's extension principle. 
We first note that by Definition \ref{fuzzy_stock_var_def} there may be more than one original random variable on $(\Omega, \Sigma, P)$ corresponding to a given fuzzy random variable $\tilde{y}$ associated with $(\Omega, \Sigma, P)$. Let $\text{Orig}(\tilde{y})$ denote the set of all potential originals of $\tilde{y}$. Assume that the probability measure $P$ is absolutely continuous with respect to Lebesgue measure, and let $y$ and $F_y$ denote an original random variable in $\text{Orig}(\tilde{y})$ and its corresponding CDF, respectively. Then we can formulate the definition of a fuzzy-valued distribution as follows.

\begin{definition}\label{fuzzy_CDF_def_new} 
The fuzzy-valued distribution function of a fuzzy random variable $\tilde{y}$ associated with $(\Omega, \Sigma, P)$ is a mapping 
$\tilde{F}_{\tilde{y}}: {\mathbb R} \rightarrow {\mathcal M}_c({\mathbb R})$, given by
\begin{equation}\label{Fuzzy_CDF_new}
[\tilde{F}_{\tilde{y}} (y_0) ] (p) =\left\{ \begin{array}{l l}
\sup\limits_{\substack{y \in \text{Orig}(\tilde{y}) \\ F_y(y_0) = p}}  \  \inf\limits_{\omega \in \Omega} \  [\mu_{\tilde{y}}(\omega)] (y(\omega)) & \quad \text{if} \ p \in [0,1]\\
\ \ \ 0 &  \quad \text{otherwise}
\end{array} \right., \qquad \forall y_0 \in {\mathbb R},
\end{equation}
where $\text{Orig}(\tilde{y})$ denotes the set of all potential originals of $\tilde{y}$. 
\end{definition}

We note that Definition \ref{fuzzy_CDF_def_new} implies that the distribution function $\tilde{F}_{\tilde{y}} (y_0)$ at a fixed point $y_0 \in {\mathbb R}$ is a fuzzy variable in ${\mathcal F}_c({\mathbb R})$, characterized by a membership function in ${\mathcal M}_c({\mathbb R})$ supported on the interval $[0,1]$. 

Clearly, a direct application of \eqref{Fuzzy_CDF_new} may be impossible in practice, as there may be no efficient method to evaluate the supremum over all original distributions. For this purpose, we consider a simpler case, and yet general enough for many practical applications, when the original random variable and its corresponding distribution are known. In this case, the fuzzy random variable can be viewed as a random variable with fuzzy parameters. For instance, a normal fuzzy random variable $\tilde{y} \sim {\mathcal N}(\tilde{\theta}_1, \tilde{\theta}_2^2)$ can be viewed as a normal random variable with two fuzzy parameters: a fuzzy mean $\tilde{\theta}_1$ and a fuzzy standard deviation $\tilde{\theta}_2$. For such type of fuzzy random variables, thanks to the continuity of the original CDF, we can use Theorem \ref{fuzzy_set_thm} and formulate fuzzy CDFs as follows. Let $F_y(y_0;\boldsymbol\theta)$ denote the (parametrized) CDF of the original random variable with real-valued parameters $\boldsymbol\theta \in {\mathbb R}^n$ evaluated at $y_0 \in {\mathbb R}$. For instance, the original distribution of a normal random variable with parameters $(\theta_1, \theta_2)$ reads
$$
F_y(y_0;\boldsymbol\theta) = \frac{1}{\sqrt{2 \pi} \theta_2} \int_{- \infty}^{y_0} e^{\frac{-(\tau - \theta_1)^2}{2 \, \theta_2^2}} \, d\tau, \qquad y_0 \in {\mathbb R}, \qquad \boldsymbol\theta = (\theta_1, \theta_2) \in {\mathbb R}^2.
$$
%

\begin{definition}\label{fuzzy_cdf_typeI_def} 
Consider a fuzzy random variable $\tilde{y}$ consisting of a known original random variable with $n$ fuzzy parameters $\tilde{\boldsymbol\theta} \in {\mathcal F}_c({\bf Z})$, with ${\bf Z} \subset {\mathbb R}^n$. Let $S_{\alpha}^{\tilde{\boldsymbol\theta}} \subset {\mathbb R}^n$ be the joint $\alpha$-cut of $\tilde{\boldsymbol\theta}$ and $F_y(y_0; \boldsymbol\theta)$ be the parametrized CDF of the original random variable, with $y_0 \in {\mathbb R}$ and $\boldsymbol\theta \in {\mathbb R}^n$. 
%
At any fixed $\alpha$-level, let $F_{\alpha}^{L}$ and $ F_{\alpha}^{R}$ be the extrema of the family of parameterized CDFs over the joint $\alpha$-cut $S_{\alpha}^{\tilde{\boldsymbol\theta}}$, referred to as the left (upper) and right (lower) bounds: 
\begingroup
\setlength\abovedisplayskip{5pt}
\setlength\belowdisplayskip{5pt}
\begin{equation}\label{envelopes2}
F_{\alpha}^{L}(y_0) = \max_{\boldsymbol\theta \in S_{\alpha}^{\tilde{\boldsymbol\theta}}} F_y(y_0;  \boldsymbol\theta), \qquad
F_{\alpha}^{R}(y_0) = \min_{\boldsymbol\theta \in S_{\alpha}^{\tilde{\boldsymbol\theta}}} F_y(y_0;  \boldsymbol\theta), \quad
\forall \alpha \in [0,1], \ \ \forall y_0 \in {\mathbb R}.
\end{equation}
\endgroup
The fuzzy CDF of $\tilde{y}$, evaluated at $y_0 \in {\mathbb R}$ and denoted by $\tilde{F}_{\tilde{y}}(y_0) = F_y(y_0; \tilde{\boldsymbol\theta})$, is then defined by a nested set of left and right bounds at different $\alpha$-levels:
$$
\tilde{F}_{\tilde{y}}(y_0) = F_y(y_0; \tilde{\boldsymbol\theta}) = \big\{ \bigl( F_{\alpha}^{L}(y_0), F_{\alpha}^{R}(y_0) \bigr), \ \alpha \in [0,1] \big\}, \qquad \forall y_0 \in {\mathbb R}.
$$
\end{definition}

\noindent
{\bf Interpretation of fuzzy CDFs.} 
We note that the fuzzy CDF in Definition \ref{fuzzy_cdf_typeI_def} is a particular type of the fuzzy CDF in Definition \ref{fuzzy_CDF_def_new}. In general, for any fixed $\alpha$-level, the set of all left and right bounds of the distribution, e.g. those given in \eqref{envelopes2}, corresponding to all points $y_0 \in {\mathbb R}$ will constitute two left and right {\it envelopes} forming a p-box. It is important to note that the left and right envelopes are not necessarily two single CDFs. In fact, for different values of $y_0 \in {\mathbb R}$, there may exist different maximizers and/or minimizers. Hence, different distributions on different regions may constitute the two envelopes. 
A fuzzy CDF can indeed be thought of as a nested set of p-boxes at different levels of possibility (corresponding to different $\alpha$-levels); see also the numerical examples in Section \ref{sec:numerics}. Hence, fuzzy CDFs provide a far more comprehensive representation of uncertainty, compared to a class of imprecise probabilistic models such as p-boxes \cite{Weichselberger:2000}, coherent lower and upper previsions \cite{Walley:91,Walley:2000}, and optimal UQ \cite{OUQ:2013} which provide only crisp lower and upper bounds from a set of admissible distributions.

\begin{remark} (Fuzzy random variables as random elements) 
There are various definitions of fuzzy random variables utilizing the well-developed concept of random elements in Fr{\'e}chet's sense \cite{Frechet:1948}. For instance, a fuzzy random variable in the sense of Puri and Ralescu \cite{Puri_Ralescu:1986} is defined as a random compact set, that is, a Borel-measurable mapping with respect to the Borel $\sigma$-field generated by the topology associated with the Haussdroff metric on the space of nonempty compact subsets of ${\mathbb R}^n$. 
Colubi et. al. \cite{Colubi:2001,Colubi:2002} define fuzzy random variables as random upper semi-continuous functions, that is, Borel-measurable mappings with respect to the Borel $\sigma$-field generated by the topology associated with the Skorokhod metric on the space of upper semicontinuous functions ${\mathbb R} \rightarrow [0, 1]$ with compact closure of their support. Both definitions generalize Kwaakernak's definition by relaxing the convexity assumption and extend it to multiple dimensions, introducing the notion of fuzzy random vectors. However, for many practical developments, these general definitions are often particularized to the one-dimensional case and to convex fuzzy variables. Under these particularizations, the general definitions reduce to Kwaakernak's definition; see e.g. \cite{Blanco-Fernandez_Etal:2014,Gil:2018}.
\end{remark}

\subsection{Fuzzy-stochastic functions}
\label{sec:FS2}

A fuzzy-stochastic function is a particular type of a fuzzy Banach space-valued function. It is a crisp map with a random vector and a fuzzy vector as arguments generating an output fuzzy random variable. 
\begin{definition}\label{fuzzy_stoch_fcn_def} 
Let ${\bf y} \in \Gamma \subset {\mathbb R}^m$ be a random vector with a bounded joint probability distribution function (PDF). Let $\tilde{\bf z} \in {\mathcal F}_c({\bf Z})$ be a fuzzy vector on ${\bf Z} \subset {\mathbb R}^n$. 
A fuzzy-stochastic function, written as $\tilde{u} ({\bf y}) = u({\bf y}, \tilde{\bf z}), \, \forall {\bf y} \in \Gamma$, is a fuzzy Banach space-valued function 
$u: {\mathcal F}({\bf Z}) \rightarrow  {\mathcal F}(B(\Gamma))$, with $B(\Gamma)$ being a Banach space of random functions on $\Gamma$. 
\end{definition}

A typical example of $B(\Gamma)$ is the space of random functions with bounded second moments, denoted by $L_{\pi}^2(\Gamma)$, where $\pi: \Gamma \rightarrow {\mathbb R}_+$ is the bounded joint PDF of ${\bf y}$.


We note that the output of a fuzzy-stochastic function $\tilde{u}$ is a particular type of fuzzy random variable for which the distribution of the ``original'' random variable is known through the joint PDF $\pi$ of the input random vector. Hence, the fuzzy CDF of $\tilde{u}$, denoted by $\tilde{F}_{\tilde{u}}$, can simply be defined through $\pi$ as in Definition \ref{fuzzy_cdf_typeI_def}.

\begin{definition}\label{fuzzy_cdf_typeII_def} 
Consider a fuzzy random variable $\tilde{u}$, being the output of a fuzzy-stochastic function defined in Definition \ref{fuzzy_stoch_fcn_def}. 
For every fixed ${\bf z} \in S_{\alpha}^{\tilde{\bf z}}$, the parameterized CDF of the corresponding original random variable, evaluated at any point $u_0 \in {\mathbb R}$, is determined by the PDF of the input random vector $\pi = \pi({\bf y})$ as
$$
F_u(u_0; {\bf z}) = \int_{\{ {\boldsymbol\tau}: u(\boldsymbol\tau, {\bf z}) \le u_0\}} \pi(\boldsymbol\tau) \, d\boldsymbol\tau.
$$
At any fixed $\alpha$-level, let $F_{\alpha}^{L}$ and $ F_{\alpha}^{R}$ be the extrema of the family of parameterized CDFs over the joint $\alpha$-cut $S_{\alpha}^{\tilde{\bf z}}$:  
$$
F_{\alpha}^{L}(u_0) = \max_{{\bf z} \in S_{\alpha}^{\tilde{\bf z}}} F_u(u_0;  {\bf z}), \qquad
F_{\alpha}^{R}(u_0) = \min_{{\bf z} \in S_{\alpha}^{\tilde{\bf z}}} F_u(u_0;  {\bf z}), \quad
\forall \alpha \in [0,1], \ \ \forall u_0 \in {\mathbb R}.
$$
%
The fuzzy CDF of $\tilde{u}$, evaluated at $u_0 \in {\mathbb R}$ and denoted by $\tilde{F}_{\tilde{u}}(u_0) = F_u(u_0; \tilde{\bf z})$, is then defined by a nested set of left and right bounds at different $\alpha$-levels:
$$
\tilde{F}_{\tilde{u}}(u_0) = F_u(u_0; \tilde{\bf z}) = \big\{ \bigl( F_{\alpha}^{L}(u_0), F_{\alpha}^{R}(u_0) \bigr), \ \alpha \in [0,1] \big\}, \qquad \forall  u_0 \in {\mathbb R}.
$$

\end{definition}

\noindent
{\bf Computation of fuzzy-stochastic functions.} Let $\tilde{u} ({\bf y}) = u({\bf y}, \tilde{\bf z})$ be a fuzzy-stochastic function. Assume that we want to compute a fuzzy QoI, denoted by $\tilde{Q}$, given in terms of $\tilde{u} ({\bf y})$. Three important examples of $\tilde{Q}$ include:
\medskip
\begin{itemize}[topsep=1pt,leftmargin=2.6\labelsep]
\setlength\itemsep{-.25em}
\item $\tilde{Q} = {\mathbb E}[ u^r({\bf y}, \tilde{\bf z}) ]$: the $r$-th fuzzy moment of $\tilde{u} ({\bf y})$, where $r$ is a positive integer. 

\medskip
\item $\tilde{Q} = \tilde{F}_{\tilde{u}}(u_0) = {\mathbb E}[ {\mathbb I}_{[u({\bf y}, \tilde{\bf z}) \le u_0]} ]$: the fuzzy CDF of $\tilde{u} ({\bf y})$ evaluated at a fixed point $u_0 \in {\mathbb R}$, where ${\mathbb I}_{[\cdot]}$ is the indicator function taking the value 1 or 0 if the event $[\cdot]$ is ``true'' or ``false'', respectively.

\medskip
\item $\tilde{Q} = {\mathbb E}[{\mathbb I}_{[g(u({\bf y},\tilde{\bf z})) \le 0]}]$: the fuzzy failure probability assuming that failure occurs when $g(\tilde{u}({\bf y})) \le 0$, where $g$ is a differential and/or integral operator on $\tilde{u}({\bf y})$. 

\end{itemize}

\medskip
\noindent
Each of the above fuzzy QoIs is the expectation of a fuzzy-stochastic function, say $\tilde{Q} = {\mathbb E}[q({\bf y}, \tilde{\bf z})]$, where $q= u^r({\bf y}, \tilde{\bf z})$ in the first example, $q = {\mathbb I}_{[u({\bf y}, \tilde{\bf z}) \le u_0]}$ in the second example, and $q={\mathbb I}_{[g(u({\bf y},\tilde{\bf z})) \le 0]}$ in the third example above. 
Algorithm \ref{ALG_fuzzy_random_functions} outlines a numerical approach for computing $\tilde{Q}$. 
%
%
%
\begin{algorithm}
\caption{Computation of fuzzy-stochastic functions}
\label{ALG_fuzzy_random_functions}
\begin{algorithmic} 
\medskip
\STATE {\bf 0.} 
Given a random vector ${\bf y} \in \Gamma$, a fuzzy vector $\tilde{\bf z} \in {\mathcal F}_c({\bf Z})$, and a fuzzy-stochastic function $q: {\mathcal F}({\bf Z}) \rightarrow {\mathcal F}(B(\Gamma))$, we compute the fuzzy QoI $\tilde{Q}= Q(\tilde{\bf z}) = {\mathbb E}[q({\bf y}, \tilde{\bf z})]$ as follows.

\medskip
\STATE {\bf 1.} {\it Interaction}: For a fixed $\alpha \in [0,1]$, find the input joint $\alpha$-cut $S_{\alpha}^{\tilde{\bf z}}$ based on the interaction between the input fuzzy variables.

\medskip
\STATE {\bf 2.} {\it Optimization}: Obtain the lower and upper bounds of the output $\alpha$-cut $S_{\alpha}^{\tilde{Q}}$ 
by solving two global optimization problems:
\begingroup
\setlength\abovedisplayskip{5pt}
\setlength\belowdisplayskip{5pt}
$$
\ubar{Q} : =  \min_{{\bf z} \in S_{\alpha}^{\tilde{\bf z}}}  Q({\bf z}), \qquad 
\bar{Q} : = \max_{{\bf z} \in S_{\alpha}^{\tilde{\bf z}}} Q({\bf z}), \qquad 
Q({\bf z}) = {\mathbb E}[q({\bf y}, {\bf z})].
$$
\endgroup
%
%
%
%

\medskip
\STATE {\bf 3.} Repeat steps {\bf 1}-{\bf 2} for various levels of $\alpha \in [0,1]$. 

\end{algorithmic}
\end{algorithm}

An iterative optimization method fo solving the optimization problems in step 2 requires $M_f$ function evaluations $Q({\bf z}^{(k)})$ at $M_f$ fixed points $\{ {\bf z}^{(k)} \}_{k=1}^{M_f} \in S_{\alpha}^{\tilde{\bf z}}$. Each function evaluation amounts to computing the expectation ${\mathbb E}[q({\bf y},{\bf z}^{(k)})]$, which may be done by a Monte Carlo sampling strategy \cite{MC,Giles:11,QMC:11,Multiindex:16,MOMC:18} or a spectral stochastic method \cite{BTZ:05,Xiu_Hesthaven,Motamed_etal:13}, depending on the regularity of $q$ with respect to ${\bf y}$.

\subsection{Fuzzy-stochastic fields}
\label{sec:FS3}

Since the solution of fuzzy-stochastic PDEs are functions of space/time in addition to being functions of random and fuzzy vectors, the notion of fuzzy-stochastic functions needs to be extended to include the dependency on space/time. A scalar fuzzy-stochastic field is indeed a particular type of a fuzzy Banach space-valued function with a vector of spatial variables, a random vector, and a fuzzy vector as arguments. Similarly, one can include a temporal variable as argument and define fuzzy-stochastic processes.
\begin{definition}\label{fuzzy_stoch_field_def} 
Let $D \subset {\mathbb R}^d$ be a compact spatial domain, with $d=1,2,3$, and consider a vector of spatial variables ${\bf x} \in D$. Let further ${\bf y} \in \Gamma \subset {\mathbb R}^m$ and $\tilde{\bf z} \in {\mathcal F}({\bf Z})$ be a random vector with a bounded joint PDF and a fuzzy vector on ${\bf Z} \subset {\mathbb R}^n$, respectively. 
A scalar fuzzy-stochastic field, written as $\tilde u ({\bf x},{\bf y}) = u({\bf x}, {\bf y}, \tilde{\bf z}), \, \forall {\bf x} \in D, \, \forall {\bf y} \in \Gamma$, is a fuzzy Banach space-valued function 
$u: {\mathcal F}({\bf Z}) \rightarrow  {\mathcal F}(B(D \times\Gamma))$, 
where $B(D \times \Gamma)$ is a Banach space of functions on $D \times \Gamma$. 
\end{definition}

An example of $B(D \times\Gamma)$ in the context of fuzzy-stochastic PDEs is the Hilbert space of functions formed by the tensor product of two Hilbert spaces $H^s(D) \otimes L_{\pi}^2(\Gamma)$.

%

\section{Fuzzy-Stochastic PDEs}
\label{sec:SFPDE}

In general, we refer to PDEs with fuzzy-stochastic parameters, including coefficients, force terms, and boundary/initial data, as {\it fuzzy-stochastic PDEs}. Without loss of generality, in the present work, we consider only the case where the PDE coefficient is a fuzzy-stochastic field and assume that the forcing and data functions are deterministic. 

\subsection{A fuzzy-stochastic elliptic model problem}
\label{sec:SF_Elliptic}

Let $D \subset {\mathbb R}^d$ be a bounded, convex, Lipschitz spatial domain, with $d=1,2,3$. Consider the following fuzzy-stochastic elliptic boundary value problem:
\begin{equation}\label{FSPDE}
\begin{array}{ll}
- \nabla_{\bf x} \cdot ( a({\bf x},{\bf y},\tilde{\bf z}) \ \nabla_{\bf x} u({\bf x},{\bf y},\tilde{\bf z}) )= f({\bf x}),  & \ \ \ \ ({\bf x},{\bf y}) \in  D \times \Gamma,  \ \ \ \ \, \tilde{\bf z} \in {\mathcal F}({\bf Z}), \\
u({\bf x},{\bf y},\tilde{\bf z}) = 0, &  \ \ \ \ ({\bf x},{\bf y}) \in \partial D \times \Gamma,  \ \ \  \tilde{\bf z} \in  {\mathcal F}({\bf Z}),
\end{array}    
\end{equation}
where ${\bf x} = (x_1, \dotsc, x_d) \in D$ is the vector of spatial variables, ${\bf y} = (y_1, \dotsc, y_m) \in \Gamma \subset {\mathbb R}^m$ is a random vector with a bounded joint PDF $\pi = \pi({\bf y}): \Gamma \rightarrow {\mathbb R}_+$, and $\tilde{\bf z} = (\tilde{z}_1, \dotsc, \tilde{z}_n)\in {\mathcal F}({\bf Z})$ is a fuzzy vector on ${\bf Z} \subset {\mathbb R}^n$ satisfying assumptions (A4)-(A6), i.e. $\tilde{\bf z} \in {\mathcal F}_c({\bf Z})$, and with a family of joint $\alpha$-cuts $S_{\alpha}^{\tilde{\bf z}} \subset {\bf Z}$ with $\alpha \in [0,1]$. The only source of uncertainty is assumed to be the parameter $a$ characterized by a fuzzy-stochastic field $\tilde a ({\bf x},{\bf y}) = a({\bf x},{\bf y},\tilde{\bf z})$. This implies that the PDE solution $\tilde u ({\bf x},{\bf y}) = u({\bf x},{\bf y},\tilde{\bf z})$ is a fuzzy-stochastic field; see Section \ref{sec:solution} for the definition of the PDE solution. 

We assume that $m$ and $n$ are finite numbers. 
We further assume 
\begin{equation}\label{Assum1_f}
f \in L^2(D),
\vspace{-.1cm}
\end{equation}
\begin{equation}\label{Assum2_a}
0 < a_{\text{min}} \le \tilde a({\bf x},{\bf y}) \le a_{\text{max}} < \infty, \qquad \forall \, {\bf x} \in D, \ \ \forall \,  {\bf y} \in \Gamma.
\vspace{-.1cm}
\end{equation}
Assumption \eqref{Assum1_f} states that the forcing function $f$ is square integrable, and assumption \eqref{Assum2_a} states that the PDE coefficient at every fixed $({\bf x}, {\bf y}) \in D \times \Gamma$ is a uniformly positive and bounded fuzzy variable in the sense of Definition \ref{greater_smaller_fuzzy}. 

\subsection{Solution of the fuzzy-stochastic problem}
\label{sec:solution}

Following Definition \ref{Sobolev_fuzzy_fcn_def} and Definition \ref{fuzzy_stoch_field_def}, 
we interpret the solution $\tilde u ({\bf x},{\bf y}) = u({\bf x},{\bf y},\tilde{\bf z})$ to \eqref{FSPDE}, under assumptions \eqref{Assum1_f}-\eqref{Assum2_a}, as a fuzzy Banach space-valued function:
\begin{equation}\label{mapping1}
u: {\mathcal F}({\bf Z}) \rightarrow  {\mathcal F}(B(D \times \Gamma)), \qquad \text{where} \ \ \ \ B(D \times \Gamma) = H_0^1(D) \otimes L_{\pi}^2(\Gamma).
\end{equation}
Here, the function space $B(D \times \Gamma)$ is formed by the tensor product of two Hilbert spaces: $H_0^1(D)$ is the closure of the space of smooth functions with compact support in the Hilbert space of functions whose first weak derivatives are square integrable; and $L_{\pi}^2(\Gamma)$ is the Hilbert space of random functions with bounded second moments. 

For the convenience of both analysis and computation, and thanks to the {\it function-set identity} \eqref{function_set1}, which will be shown to hold (see Theorem \ref{wellposed}) due to the continuity of the mapping in \eqref{mapping1}, we will define the solution to \eqref{FSPDE} through the  corresponding parametric problem:
\begin{equation}\label{PFSPDE}
\begin{array}{lll}
- \nabla_{\bf x} \cdot ( a({\bf x},{\bf y},{\bf z}) \ \nabla_{\bf x} u({\bf x},{\bf y},{\bf z}) )= f({\bf x}),  & \text{in  }  & D \times \Gamma \times S_{\alpha}^{\tilde{\bf z}}, \\
u({\bf x},{\bf y},{\bf z}) = {\bf 0}, &  \text{on } & \partial D \times \Gamma \times S_{\alpha}^{\tilde{\bf z}},
\end{array}    
\end{equation}
where, following the assumption \eqref{Assum2_a}, 
\begin{equation}\label{Assum2_a2}
 0<a_{min} \le a({\bf x},{\bf y},{\bf z})  \le a_{max} < \infty, \qquad \forall \, {\bf x}\in D, \ \   \forall \, {\bf y}\in \Gamma, \ \ \forall \, {\bf z} \in S_{\alpha}^{\tilde{\bf z}}.
 \end{equation}
Corresponding to the interpretation \eqref{mapping1}, we interpret the solution $u({\bf x},{\bf y},{\bf z})$ to the parametric problem \eqref{PFSPDE} as a Banach space-valued function on $S_{\alpha}^{\tilde{\bf z}}$:
\begin{equation}\label{mapping2}
u: S_{\alpha}^{\tilde{\bf z}} \rightarrow B(D \times \Gamma), \qquad \text{where} \ \ \ \ B(D \times \Gamma) = H_0^1(D) \otimes L_{\pi}^2(\Gamma).
\end{equation}
As we will show later (see Theorem \ref{wellposed}), the mapping in \eqref{mapping2} is continuous and $u$ is uniformly bounded on $S_{\alpha}^{\tilde{\bf z}}$, i.e. $u \in L^{\infty}(S_{\alpha}^{\tilde{\bf z}}; \, H_0^1(D) \otimes L_{\pi}^2(\Gamma))$. 
This suggests that we may obtain the $\alpha$-cuts of the solution \eqref{mapping1} to the fuzzy-stochastic problem \eqref{FSPDE} from the extrema of the solution \eqref{mapping2} to the parametric problem \eqref{PFSPDE} on $S_{\alpha}^{\tilde{\bf z}}$:
\begin{equation}\label{new_star}
S_{\alpha}^{\tilde{u}} ({\bf x},{\bf y})= [\min_{{\bf z} \in S_{\alpha}^{\tilde{\bf z}}} {u}({\bf x},{\bf y},{\bf z}),
\max_{{\bf z} \in S_{\alpha}^{\tilde{\bf z}}} {u}({\bf x},{\bf y},{\bf z})] =: [\ubar{u}_{\alpha}({\bf x},{\bf y}), \bar{u}_{\alpha}({\bf x},{\bf y})], \ \ \alpha \in [0,1].
\end{equation}
Note that the lower and upper limits of the $\alpha$-cuts in \eqref{new_star} are stochastic fields. We also notice that the solution $u$ to \eqref{PFSPDE} is $\alpha$-dependent. However, for ease of notation, we omit the explicit dependence on $\alpha$ when no ambiguity arises.




\subsection{Well-posedness and regularity analysis}

The interpretation of the solution to fuzzy-stochastic PDE problems through Banach space-valued functions, i.e. the mappings \eqref{mapping1} and \eqref{mapping2}, simplifies the analysis of such problems. 
It transforms the original problem into a parametric one, as done in the case of pure stochastic PDEs. We can therefore extend the proofs for well-posedness and regularity of deterministic (see e.g. \cite{Evans}) and stochastic (see e.g. \cite{BNT:07,Motamed_etal:13,Motamed_etal:15}) PDE problems to fuzzy-stochastic PDE problems. 

We will hence base the analysis on the parametric representation \eqref{PFSPDE} of problem \eqref{FSPDE} and consider the following weak formulation of problem \eqref{PFSPDE} pointwise in ${\bf z} \in S_{\alpha}^{\tilde{\bf z}}$.

\medskip
\noindent
{\bf Weak formulation I.} Find $u: S_{\alpha}^{\tilde{\bf z}} \rightarrow H_0^1(D) \otimes L_{\pi}^2(\Gamma)$ such that $\forall {\bf z} \in S_{\alpha}^{\tilde{\bf z}}$ and for all test functions $v \in H_0^1(D) \otimes L_{\pi}^2(\Gamma)$ the following holds:
\begin{equation}\label{weak_formula}
\int_{D \times \Gamma}   \hskip -.3cm a({\bf x},{\bf y},{\bf z})  \nabla_{\bf x} u({\bf x},{\bf y},{\bf z}) \cdot \nabla_{\bf x} v({\bf x},{\bf y})  \pi({\bf y}) d{\bf y} d{\bf x}
= \int_{D \times \Gamma}   \hskip -.1cm   f({\bf x})  v({\bf x},{\bf y})  \pi({\bf y})  d{\bf y}  d{\bf x}.
\end{equation}
Such a solution, provided it exists, is referred to as a weak solution to \eqref{PFSPDE}. 


\begin{theorem}\label{wellposed}
Under the assumptions \eqref{Assum1_f} and \eqref{Assum2_a2}, there exists a unique weak solution $u \in C^0(S_{\alpha}^{\tilde{\bf z}}; \, H_0^1(D) \otimes L_{\pi}^2(\Gamma))$ to the parametric problem \eqref{PFSPDE}. Moreover, the solution depends continuously on the data.
\end{theorem}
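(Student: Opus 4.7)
\textbf{Proof proposal for Theorem \ref{wellposed}.} The plan is to reduce the fuzzy--stochastic problem to a parametric family of deterministic--stochastic elliptic problems indexed by ${\bf z} \in S_{\alpha}^{\tilde{\bf z}}$, apply Lax--Milgram pointwise in ${\bf z}$, and then upgrade pointwise existence to a continuous dependence on ${\bf z}$ by a subtraction argument. Let $H := H_0^1(D) \otimes L_{\pi}^2(\Gamma)$, equipped with the natural norm $\|v\|_H^2 = \int_{D \times \Gamma} |\nabla_{\bf x} v|^2 \, \pi({\bf y}) \, d{\bf y}\, d{\bf x}$. For each fixed ${\bf z} \in S_{\alpha}^{\tilde{\bf z}}$, define the bilinear form $B_{\bf z}(u,v) := \int_{D \times \Gamma} a({\bf x},{\bf y},{\bf z}) \, \nabla_{\bf x} u \cdot \nabla_{\bf x} v \, \pi({\bf y}) \, d{\bf y} \, d{\bf x}$ and the linear form $L(v) := \int_{D\times\Gamma} f({\bf x}) \, v({\bf x},{\bf y}) \, \pi({\bf y}) \, d{\bf y} \, d{\bf x}$.

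The first step is pointwise well-posedness. Using \eqref{Assum2_a2}, $B_{\bf z}$ is continuous, with $|B_{\bf z}(u,v)| \le a_{\max} \|u\|_H \|v\|_H$, and coercive, with $B_{\bf z}(v,v) \ge a_{\min} \|v\|_H^2$. By \eqref{Assum1_f} and the Poincar\'e inequality on $D$, $L$ is a continuous linear functional on $H$ with $|L(v)| \le C_P \|f\|_{L^2(D)} \|v\|_H$. The Lax--Milgram theorem therefore yields a unique $u(\cdot,\cdot,{\bf z}) \in H$ solving \eqref{weak_formula} for every ${\bf z} \in S_{\alpha}^{\tilde{\bf z}}$, together with the a priori bound
\begin{equation*}
\|u(\cdot,\cdot,{\bf z})\|_H \le \frac{C_P}{a_{\min}} \|f\|_{L^2(D)}, \qquad \forall \, {\bf z} \in S_{\alpha}^{\tilde{\bf z}},
\end{equation*}
which is uniform in ${\bf z}$ and establishes $u \in L^{\infty}(S_{\alpha}^{\tilde{\bf z}}; H)$ as well as continuous dependence on the data $f$.

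The key step is then continuity in ${\bf z}$, for which we assume (as is standard and implicit in the setup) that $a({\bf x},{\bf y},\cdot)$ is continuous on $S_{\alpha}^{\tilde{\bf z}}$ uniformly in $({\bf x},{\bf y}) \in D \times \Gamma$. Fix ${\bf z}_0 \in S_{\alpha}^{\tilde{\bf z}}$ and take any sequence ${\bf z}_k \to {\bf z}_0$. Setting $w_k := u(\cdot,\cdot,{\bf z}_k) - u(\cdot,\cdot,{\bf z}_0)$ and subtracting the two weak formulations yields
\begin{equation*}
B_{{\bf z}_k}(w_k, v) = - \int_{D \times \Gamma} \bigl( a({\bf x},{\bf y},{\bf z}_k) - a({\bf x},{\bf y},{\bf z}_0) \bigr) \, \nabla_{\bf x} u(\cdot,\cdot,{\bf z}_0) \cdot \nabla_{\bf x} v \, \pi({\bf y}) \, d{\bf y} \, d{\bf x}
\end{equation*}
for every $v \in H$. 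Taking $v = w_k$, using coercivity on the left and Cauchy--Schwarz together with the uniform bound on $\|u(\cdot,\cdot,{\bf z}_0)\|_H$ on the right, I get
\begin{equation*}
a_{\min} \|w_k\|_H \le \|a(\cdot,\cdot,{\bf z}_k) - a(\cdot,\cdot,{\bf z}_0)\|_{L^{\infty}(D \times \Gamma)} \, \frac{C_P}{a_{\min}} \|f\|_{L^2(D)}.
\end{equation*}
The uniform continuity hypothesis on $a$ forces the right-hand side to vanish as ${\bf z}_k \to {\bf z}_0$, giving $u \in C^0(S_{\alpha}^{\tilde{\bf z}}; H)$.

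The main obstacle I expect is precisely isolating and justifying the regularity of $a$ in ${\bf z}$ needed for the last step; \eqref{Assum2_a2} alone only provides uniform ellipticity, so continuity in ${\bf z}$ must either be added as a standing hypothesis or extracted from the structural form of $a({\bf x},{\bf y},{\bf z})$ used in the paper. Aside from this point, the argument is a direct parametric extension of the classical deterministic and stochastic elliptic theory (cf.\ \cite{Evans,BNT:07}), and the function--set identity of Theorem \ref{fuzzy_set_thm} then applies thanks to the established continuity of the mapping \eqref{mapping2}, justifying the representation \eqref{new_star}.
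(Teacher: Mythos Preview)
Your argument follows the same Lax--Milgram strategy as the paper: pointwise coercivity and boundedness from \eqref{Assum2_a2}, boundedness of the linear functional from \eqref{Assum1_f} via Poincar\'e, and the resulting uniform energy estimate $\|u({\bf z})\|_H \le (C/a_{\min})\|f\|_{L^2(D)}$. On that core part the two proofs are essentially identical.

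Where you differ is in the treatment of the $C^0$ continuity in ${\bf z}$. The paper simply asserts, after deriving the uniform bound, that ``the mapping $u: S_{\alpha}^{\tilde{\bf z}} \rightarrow H_0^1(D)\otimes L_\pi^2(\Gamma)$ is continuous and uniformly bounded'' and stops there. You instead supply an explicit subtraction argument, testing the difference of two weak formulations against $w_k = u({\bf z}_k)-u({\bf z}_0)$ and invoking continuity of $a(\cdot,\cdot,{\bf z})$ in ${\bf z}$. This is a genuine improvement: the uniform bound alone does not yield continuity, and your observation that some ${\bf z}$-regularity of $a$ must be assumed is exactly right --- \eqref{Assum2_a2} is silent on this, and the paper only introduces such regularity later in \eqref{assum_reg}. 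So your proposal is correct and in fact closes a gap that the paper's proof leaves open; the trade-off is that you must make the extra hypothesis on $a$ explicit, whereas the paper treats it as implicit in the setup.
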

\begin{proof} 
The proof is an easy extension of the proof for deterministic problems; see e.g. Section 6 of \cite{Evans}.
\end{proof}

By Theorem \ref{fuzzy_set_thm} and the continuity of the mapping $u: S_{\alpha}^{\tilde{\bf z}} \rightarrow H_0^1(D) \otimes L_{\pi}^2(\Gamma)$ by Theorem \ref{wellposed}, we have the function-set identity 
\begin{equation}\label{set_function_sec33}
S_{\alpha}^{\tilde{u}} ({\bf x},{\bf y}) = {u}({\bf x},{\bf y},S_{\alpha}^{\tilde{\bf z}}), \qquad \forall {\bf x} \in D, \quad \forall {\bf y} \in \Gamma.
\end{equation}
The lower and upper limits of the $\alpha$-cuts of solution \eqref{mapping1} to the fuzzy-stochastic problem \eqref{FSPDE} may then be obtained from the extrema of the solution \eqref{mapping2} to the parametric problem \eqref{PFSPDE}. In particular, provided the solution $u=u({\bf x}, {\bf y}, {\bf z})$ is a continuous function for every fixed point $({\bf x}, {\bf y}) \in D \times \Gamma$, its $\alpha$-cuts $S_{\alpha}^{\tilde{u}}({\bf x}, {\bf y})$ given by \eqref{set_function_sec33} will be compact, nested intervals given by \eqref{new_star} 
and satisfying $S_{\alpha_2}^{\tilde{u}} \subset S_{\alpha_1}^{\tilde{u}}$ with $0 \le \alpha_1 \le \alpha_2 \le 1$. We notice that in the absence of pointwise continuity, an $\alpha$-cut in \eqref{set_function_sec33} may be the union of disjoint intervals, and \eqref{new_star} may not hold. Nevertheless, in this latter case, we can still use \eqref{set_function_sec33} and obtain the lower and upper limits of $S_{\alpha}^{\tilde{u}} ({\bf x},{\bf y})$. 


As a corollary of Theorem \ref{wellposed}, we have the following result.
\begin{corollary}\label{wellposed2}
Consider the fuzzy-stochastic PDE problem \eqref{FSPDE} under the assumptions \eqref{Assum1_f}-\eqref{Assum2_a}. 
There exists a unique solution $\tilde u \in {\mathcal F}(H_0^1(D) \otimes L_{\pi}^2(\Gamma))$ that depends continuously on the data.
\end{corollary}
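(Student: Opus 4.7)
The plan is to lift the parametric well-posedness of Theorem \ref{wellposed} to the fuzzy-stochastic level via the $\alpha$-cut/level-set machinery developed in Section \ref{sec:preliminaries}. First, I would observe that by assumption \eqref{Assum2_a} together with Definition \ref{greater_smaller_fuzzy}, the zero-cut $S_0^{\tilde{\bf z}}$ is contained in the region where $a_{\text{min}} \le a({\bf x},{\bf y},{\bf z}) \le a_{\text{max}}$, and by the inclusion property \eqref{inclusion_vectors} the same bounds persist on every nested $\alpha$-cut $S_\alpha^{\tilde{\bf z}}$. Hence the uniform ellipticity hypothesis \eqref{Assum2_a2} of the parametric problem \eqref{PFSPDE} holds at every level $\alpha \in [0,1]$, so Theorem \ref{wellposed} applies and delivers a unique continuous map $u(\cdot,\cdot,\cdot) \in C^0(S_\alpha^{\tilde{\bf z}};\, H_0^1(D) \otimes L_\pi^2(\Gamma))$, bounded uniformly in ${\bf z}$ by $(C/a_{\text{min}})\|f\|_{L^2(D)}$.

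Second, because $\tilde{\bf z}$ satisfies (A4)--(A6) and $u$ is continuous in ${\bf z}$, I would apply the function-set identity of Theorem \ref{fuzzy_set_thm} pointwise in $({\bf x},{\bf y}) \in D \times \Gamma$ to obtain the $\alpha$-cuts
\begin{equation*}
S_\alpha^{\tilde u}({\bf x},{\bf y}) = \bigl[\, \min_{{\bf z} \in S_\alpha^{\tilde{\bf z}}} u({\bf x},{\bf y},{\bf z}),\ \max_{{\bf z} \in S_\alpha^{\tilde{\bf z}}} u({\bf x},{\bf y},{\bf z}) \,\bigr],
\end{equation*}
which form a nonempty, compact, nested family of intervals (nestedness is inherited from \eqref{inclusion_vectors} via the continuity of $u$ in ${\bf z}$). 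By the level-set representation of Definition \ref{def_alpha_cut}, this nested family reconstructs a unique membership function $\mu_{\tilde u}$, giving a well-defined element $\tilde u \in \mathcal F(H_0^1(D) \otimes L_\pi^2(\Gamma))$. Uniqueness is then inherited directly from the parametric level: any competing fuzzy solution $\tilde u'$ must, when evaluated at a fixed ${\bf z} \in S_\alpha^{\tilde{\bf z}}$, solve the weak formulation \eqref{weak_formula} to which Lax--Milgram gives a unique answer; thus $\tilde u'$ and $\tilde u$ share the same $\alpha$-cut at every level and hence the same membership function.

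Third, for continuous dependence on the data, I would exploit the linearity of the parametric solution map in $f$ and the energy estimate of Theorem \ref{wellposed}, both of which are uniform in ${\bf z} \in S_\alpha^{\tilde{\bf z}}$ and in $\alpha$. For two forcings $f,f'$ with corresponding fuzzy solutions $\tilde u,\tilde u'$, the difference $u(\cdot,\cdot,{\bf z})-u'(\cdot,\cdot,{\bf z})$ is bounded in $H_0^1(D) \otimes L_\pi^2(\Gamma)$ by $(C/a_{\text{min}})\|f-f'\|_{L^2(D)}$ independently of ${\bf z}$ and of $\alpha$. This yields Lipschitz continuity in the natural Hausdorff-type fuzzy metric between the $\alpha$-cuts of $\tilde u$ and $\tilde u'$.

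The main obstacle I anticipate is making ``continuous dependence on data'' precise for a fuzzy-valued solution, since the paper has not explicitly introduced a topology on $\mathcal F(H_0^1(D) \otimes L_\pi^2(\Gamma))$. The standard remedy is to adopt the supremum-over-$\alpha$ Hausdorff metric on $\alpha$-cuts, under which the uniform energy estimate immediately gives the required bound. A related technical point is verifying that the family of intervals produced by the function-set identity genuinely arises from a membership function satisfying (A2)--(A3); this follows from the upper semicontinuity and quasi-concavity of $\mu_{\tilde{\bf z}}$ propagated through the continuous map $u(\cdot,\cdot,\cdot)$, but it is worth flagging as the step where the structural assumptions on $\tilde{\bf z}$ are actually being used.
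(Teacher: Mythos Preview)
Your proposal is correct and follows essentially the same route the paper takes: the corollary is stated without a separate proof, relying on the discussion immediately preceding it, which combines Theorem~\ref{wellposed} (parametric well-posedness and continuity in ${\bf z}$) with Theorem~\ref{fuzzy_set_thm} (the function-set identity) to produce the compact, nested $\alpha$-cuts \eqref{new_star} defining $\tilde u$. You in fact supply more detail than the paper does---uniqueness via Lax--Milgram at each fixed ${\bf z}$, continuous dependence via the uniform energy estimate---and your flagged concern about the absence of an explicit topology on $\mathcal F(H_0^1(D)\otimes L_\pi^2(\Gamma))$ is legitimate, as the paper leaves this implicit.
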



The inclusion property of the $\alpha$-cuts of the solution allows for efficient computations in fuzzy space for two reasons. First, it will allow us to restrict fuzzy computations to only a few $\alpha \in [0,1]$ levels, for example $\alpha=0, 0.25, 0.5, 0.75, 1$. After computing $S_{\alpha}^{\tilde{u}}$ for these $\alpha$ values, the output membership function can be constructed by interpolation. Secondly, since the zero-cut $S_0^{\tilde{\bf z}}$ contains all other $\alpha$-cuts, i.e., $S_{\alpha}^{\tilde{\bf z}} \subset S_0^{\tilde{\bf z}}, \ \forall \alpha \in (0,1]$, we will need to construct the response surface of the solution $u({\bf x}, {\bf y}, .)$ only over the zero-cut. Hence we solve the parametric problem \eqref{PFSPDE} over the zero-cut. The response surface of the solution over any desired $\alpha$-cut may then be obtained by restricting the zero-cut response surface to the desired $\alpha$-cut.



To study the regularity of the solution to the parametric problem \eqref{PFSPDE} with respect to parameters both in stochastic space and in fuzzy space, we combine the stochastic and fuzzy spaces and let $\boldsymbol\xi = ({\bf y}, {\bf z})$ be the parameter vector in the {\it combined} stochastic-fuzzy space 
$\Xi := \Gamma \times S_0^{\tilde{\bf z}} \subset {\mathbb R}^N$, where $N = m + n$. 
We then study the ${\boldsymbol\xi}$-regularity of the solution $u({\bf x},\boldsymbol\xi)$ to \eqref{PFSPDE}. 
We note that since the solution over any desired $\alpha$-cut $S_{\alpha}^{\tilde{\bf z}}$, with $\alpha \in [0,1]$, can be obtained by restricting the zero-cut solution to the desired $\alpha$-cut, we need to consider only the zero-cut $S_0^{\tilde{\bf z}}$ in $\Xi$. Indeed, the regularity of the solution over the zero-cut will determine the regularity of the solution over all other $\alpha$-cuts.

We view the solution to the parametric problem \eqref{PFSPDE} as a function of $\boldsymbol\xi \in \Xi$ taking values in the Hilbert space $H_0^1(D)$ and study the regularity of the mapping $u:  \Xi \rightarrow H_0^1(D)$. 
In the light of this interpretation we consider the following weak formulation of problem \eqref{PFSPDE} pointwise in $\boldsymbol\xi \in \Xi$.

\medskip
\noindent
{\bf Weak formulation II.} Find $u: \Xi \rightarrow H_0^1(D)$ such that $\forall \boldsymbol\xi \in \Xi$ and for all test functions $v \in H_0^1(D)$ the following holds:
\begin{equation}\label{weak_formula_xi}
B[u,v] = f(v),
\vspace{-.2cm}
\end{equation}
\vspace{-.5cm}
\begin{equation}\label{B_f}
B[u,v] := \int_{D} a({\bf x},\boldsymbol\xi) \ \nabla_{\bf x} u({\bf x},\boldsymbol\xi) \cdot \nabla_{\bf x} v({\bf x}) \, d{\bf x}, \qquad f(v) = \int_{D} f({\bf x}) \, v({\bf x}) \, d{\bf x}.
\end{equation}
%
Clearly, following Theorem \ref{wellposed}, there exist a unique solution $u \in L^{\infty}(\Xi; H_0^1(D))$ to the parametric problem \eqref{PFSPDE}.

For regularity analysis we will also need some regularity assumptions on the $\boldsymbol\xi$-regularity of the PDE coefficient. 
Let ${\bf k} = (k_1, \dotsc, k_N) \in {\mathbb N}^N$ be a multi-index with $|{\bf k}| = k_1 + \dotsc + k_N$ and  ${\mathbb N}$ denoting the set of all non-negative integers including zero. We make the following regularity assumption on the PDE coefficient,
\vspace{-.1cm}
\begin{equation}\label{assum_reg}
\partial_{\boldsymbol\xi}^{|{\bf k}|} a( ., \boldsymbol\xi) 
: = \frac{\partial^{|{\bf k}|} a( ., \boldsymbol\xi)}{\partial_{\xi_1}^{k_1} \, \dotsc \partial_{\xi_N}^{k_N}} 
\in L^{\infty}(D), \quad
0 \le |{\bf k}| \le s, \ \ \ s \in {\mathbb N}, \qquad
\forall \, \boldsymbol\xi \in \Xi.
\vspace{-.1cm}
\end{equation}
We can now state the following regularity result.

\begin{theorem}\label{regularity2}
For the solution of the parametric problem \eqref{PFSPDE} with the forcing term satisfying \eqref{Assum1_f} and the coefficient satisfying \eqref{Assum2_a2} and \eqref{assum_reg}, we have,
$$
\partial_{\boldsymbol\xi}^{|{\bf k}|} {u}: = \frac{\partial^{|{\bf k}|} u}{\partial_{\xi_1}^{k_1} \, \dotsc \partial_{\xi_N}^{k_N}}  \in L^{\infty}(\Xi; H_0^1(D)), \qquad {\bf k} \in {\mathbb N}^N, \quad 0\le | {\bf k} | \le s.
$$
\end{theorem}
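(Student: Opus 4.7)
The plan is to mimic the strategy of Theorem \ref{regularity1}, but using the multivariate Leibniz rule in place of the one-dimensional one, and proceeding by induction on the total order $|{\bf k}|$. The base case $|{\bf k}|=0$ is exactly the existence statement already recorded after \eqref{f_regularity}, so nothing new is required. For the inductive step we assume that $\partial_{\boldsymbol\xi}^{|{\bf j}|} u \in L^{\infty}(\Xi;H_0^1(D))$ for every multi-index ${\bf j}$ with $|{\bf j}| < |{\bf k}| \le s$, and prove the same statement for ${\bf k}$.

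The first step is to differentiate the weak formulation \eqref{weak_formula_xi} formally by applying $\partial_{\boldsymbol\xi}^{{\bf k}}$ to both sides. Since the right-hand side $f(v)$ is independent of $\boldsymbol\xi$ it vanishes, and the Leibniz rule on $B[u,v]$ yields, for every $v \in H_0^1(D)$,
\begin{equation*}
B[\partial_{\boldsymbol\xi}^{{\bf k}} u, v] \;=\; F_{{\bf k}}(v), \qquad
F_{{\bf k}}(v) := - \sum_{\substack{{\bf 0}\le {\boldsymbol\ell}\le {\bf k}\\ |{\boldsymbol\ell}|\ge 1}} {{\bf k} \choose {\boldsymbol\ell}} \int_D \partial_{\boldsymbol\xi}^{{\boldsymbol\ell}} a({\bf x},\boldsymbol\xi)\; \nabla_{\bf x} \partial_{\boldsymbol\xi}^{{\bf k}-{\boldsymbol\ell}} u({\bf x},\boldsymbol\xi) \cdot \nabla_{\bf x} v({\bf x})\, d{\bf x},
\end{equation*}
where ${{\bf k} \choose {\boldsymbol\ell}}$ stands for the product of componentwise binomial coefficients. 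This is again a problem of the form \eqref{weak_formula_xi}--\eqref{B_f}: the bilinear form $B$ is unchanged and hence still uniformly coercive and bounded by \eqref{B_regularity}; only the right-hand side is new. Existence, uniqueness, and the desired bound on $\partial_{\boldsymbol\xi}^{\bf k} u$ will follow from Lax-Milgram once $F_{\bf k}$ is shown to be a bounded linear functional on $H_0^1(D)$, uniformly in $\boldsymbol\xi \in \Xi$.

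To verify that boundedness, I would estimate each term of $F_{{\bf k}}(v)$ by Cauchy--Schwarz in $L^2(D)$ and pull out $\|\partial_{\boldsymbol\xi}^{{\boldsymbol\ell}} a(\cdot,\boldsymbol\xi)\|_{L^\infty(D)}$, which is finite uniformly in $\boldsymbol\xi$ by the regularity hypothesis \eqref{assum_reg} because $|{\boldsymbol\ell}| \le |{\bf k}| \le s$. The remaining factor $\|\nabla_{\bf x}\partial_{\boldsymbol\xi}^{{\bf k}-{\boldsymbol\ell}} u(\cdot,\boldsymbol\xi)\|_{L^2(D)}$ equals (up to the Poincar\'e equivalence) $\|\partial_{\boldsymbol\xi}^{{\bf k}-{\boldsymbol\ell}} u(\boldsymbol\xi)\|_{H_0^1(D)}$ and is bounded uniformly in $\boldsymbol\xi$ by the inductive hypothesis, since the constraint $|{\boldsymbol\ell}|\ge 1$ gives $|{\bf k}-{\boldsymbol\ell}|<|{\bf k}|$. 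Combining these estimates produces a bound of the form
\begin{equation*}
\|F_{{\bf k}}\|_{H^{-1}(D)} \;\le\; C_{{\bf k}}\;\sum_{\substack{{\bf 0}\le {\boldsymbol\ell}\le {\bf k}\\ |{\boldsymbol\ell}|\ge 1}} \|\partial_{\boldsymbol\xi}^{{\boldsymbol\ell}} a\|_{L^\infty(D\times\Xi)}\;\|\partial_{\boldsymbol\xi}^{{\bf k}-{\boldsymbol\ell}} u\|_{L^{\infty}(\Xi;H_0^1(D))},
\end{equation*}
which is finite and independent of $\boldsymbol\xi$. Lax--Milgram then yields a unique $\partial_{\boldsymbol\xi}^{{\bf k}} u(\boldsymbol\xi) \in H_0^1(D)$ for each $\boldsymbol\xi$, together with the uniform bound $\|\partial_{\boldsymbol\xi}^{{\bf k}} u(\boldsymbol\xi)\|_{H_0^1(D)} \le a_{\text{min}}^{-1}\,\|F_{{\bf k}}\|_{H^{-1}(D)}$, completing the induction.

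The only mildly delicate point is the bookkeeping in the induction: one needs the bounds on \emph{all} mixed derivatives of order strictly less than $|{\bf k}|$ to control a single derivative of order $|{\bf k}|$, so the induction must be carried out over $|{\bf k}|$ (not over a single component index) and applied to every multi-index of that total order simultaneously. Once this is recognized, the rest is a direct application of Leibniz, H\"older, Poincar\'e, and Lax-Milgram, exactly as in the proof of Theorem \ref{regularity1}.
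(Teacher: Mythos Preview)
Your proposal is correct and follows exactly the approach the paper intends: the paper itself merely states that the proof is an easy generalization of Theorem~\ref{regularity1} via the multivariate Leibniz rule and induction on $|{\bf k}|$, referring to \cite{Motamed_etal:13} for details. Your write-up fills in precisely those details, including the correct observation that the induction must proceed over the total order $|{\bf k}|$ so that all lower-order mixed derivatives are available in the right-hand side functional $F_{\bf k}$.
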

\begin{proof}
The proof is an easy extension of the proof for stochastic problems; see e.g. \cite{Motamed_etal:13,Motamed_etal:15}.
\end{proof}

\section{Numerical Experiments}
\label{sec:numerics}

In this section we present two numerical examples. 
In both examples, we consider the following fuzzy-stochastic elliptic problem:
\vspace{-.4cm}
\begin{subequations}\label{FSPDE_1D}
\begin{gather}
\frac{d}{dx} \bigl( a(x,{\bf y},\tilde{\bf z}) \, \frac{du}{dx}(x,{\bf y},\tilde{\bf z}) \bigr) = 0, \qquad x \in [0,L], \quad {\bf y} \in \Gamma, \quad \tilde{\bf z} \in {\mathcal F}({\bf Z}), \label{PDE_1D}\\
u(0,{\bf y},\tilde{\bf z})= 0, \quad a(L,{\bf y},\tilde{\bf z}) \,\frac{du}{dx} (L,{\bf y},\tilde{\bf z})= 1. \label{BCs}
\end{gather}
\end{subequations}
Here, the source of uncertainty is the parameter $a$ characterized by a fuzzy-stochastic field; see the two examples below. The solution to \eqref{FSPDE_1D} is analytically given by
\vspace{-.2cm}
\begin{equation}\label{FSPDE_1D_sol}
u(x, {\bf y}, \tilde{\bf z}) = \int_0^x a^{-1} (\xi, {\bf y} , \tilde{\bf z}) \, d\xi.
\vspace{-.2cm}
\end{equation}
We note that in more complex problems in higher dimensions, the PDE problem needs to be discretized on the spatial domain by a numerical method. Here, we consider the one-dimensional problem \eqref{FSPDE_1D} and focus on computations in fuzzy and stochastic spaces. 
We end this section by a short, general discussion on the computational cost of fuzzy-stochastic PDE problems.

\subsection{Numerical example 1}
\label{sec:numerics1}
As an illustrative example, we let $L=2$ and consider \eqref{FSPDE_1D} with the fuzzy-stochastic parameter
$$
a(x,y,\tilde{\bf z}) = a_1(x) \, a_2(y,\tilde{\bf z}) = (2 + \sin(2 \pi \, x/L)) \, e^{\tilde z_1 + y \, \tilde z_2}, \qquad y \sim {\mathcal N}(0,1), \quad \tilde{\bf z} = (\tilde z_1, \tilde z_2).
$$
Here, $a$ is a fuzzy-stochastic field, given by the product of a deterministic function $a_1(x)$ and a fuzzy-stochastic function $a_2(y,\tilde{\bf z})$, being a lognormal random variable with a fuzzy mean $\tilde z_1$ and a fuzzy standard deviation $\tilde z_2$, i.e. $
a_2(y,\tilde{\bf z}) \sim \ln \, {\mathcal N}[\tilde z_1, \tilde z_2^2]$. We assume that $\tilde{z}_1$ and $\tilde{z}_2$ are triangular numbers, that is, they have triangular-shaped membership functions, uniquely described by triples $\langle z_i^l,z_i^m, z_i^r \rangle$, where $z_i^l < z_i^m <z_i^r$ and such that $\mu_{\tilde{z}_i}(z_i^l) = \mu_{\tilde{z}_i}(z_i^r) = 0$ and $\mu_{\tilde{z}_i}(z_i^m)=1$, with $i=1,2$. 
The marginal $\alpha$-cuts of the two fuzzy variables are then given by
$$
S_{\alpha}^{\tilde{z}_1} = [z_{1}^l+\alpha \, (z_{1}^m-z_{1}^l), \, z_{1}^r-\alpha \, (z_{1}^r-z_{1}^m)] =: [a_{\alpha}, b_{\alpha}],
$$
$$
S_{\alpha}^{\tilde{z}_2} = [z_{2}^l+\alpha \, (z_{2}^m-z_{2}^l), \, z_{2}^r-\alpha \, (z_{2}^r-z_{2}^m)] =: [c_{\alpha}, d_{\alpha}].
$$
We consider two cases of non-interactive and fully interactive fuzzy variables. If $\tilde z_1$ and $\tilde z_2$ are non-interactive, following Definition \ref{def_noninteractive}, their joint $\alpha$-cut is 
$$
S_{\alpha}^{\tilde{\bf z}} = [a_{\alpha},b_{\alpha}]\times[c_{\alpha},d_{\alpha}].
$$
If $\tilde z_1$ and $\tilde z_2$ are fully interactive, 
following Definition \ref{def_fullyinteractive}, their joint $\alpha$-cut may be considered to be a piecewise linear curve in ${\mathbb R}^2$ with the Euclidean length 
$$
L_{\alpha} =  \sqrt{(a_{\alpha} - z_1^m)^2 + (c_{\alpha} - z_2^m)^2} + \sqrt{(b_{\alpha} - z_1^m)^2 + (d_{\alpha} - z_2^m)^2} =: L_{1,\alpha} + L_{2,\alpha}, 
$$
given by the collection of points
$$
S_{\alpha}^{\tilde{\bf z}} = \{ (z_1, z_2) = \boldsymbol\varphi(s), \, s \in [0, L_{\alpha}] \}, 
$$
with the piecewise linear map
$$
\boldsymbol\varphi(s) = \left\{ \begin{array}{l l}
(  a_{\alpha} + s |z_1^m - a_{\alpha}| / L_{1,\alpha}, \, c_{\alpha} + s |z_2^m - c_{\alpha}| / L_{1,\alpha} ) & \qquad s \in [0, L_{1,\alpha}]\\
(  z_1^m + s | b_{\alpha} - z_1^m | / L_{2,\alpha}, \, z_2^m + s | d_{\alpha} - z_2^m| / L_{2,\alpha} ) & \qquad s \in [L_{1,\alpha}, L_{\alpha}]
\end{array} \right..
$$

We consider the following QoIs
\begin{align*}
& \tilde{Q}_1 = Q_1(\tilde{\bf z}) = {\mathbb E}[u(L,y,\tilde{\bf z})], \\
& \tilde{Q}_2 (x) = Q_2(x,\tilde{\bf z}) = {\mathbb E}[u(x,y,\tilde{\bf z})], \\
& \tilde{Q}_3 (y) = Q_3(y,\tilde{\bf z}) = u(L,y,\tilde{\bf z}).
\end{align*}
These QoIs cover a wide range of fuzzy quantities: $\tilde{Q}_1$ is a fuzzy function; $\tilde{Q}_2$ is a fuzzy field; and $\tilde{Q}_3$ is a fuzzy-stochastic function. We now discuss the computation and visualization of each quantity in turn, based on Algorithm \ref{ALG_fuzzy_functions} and Algorithm \ref{ALG_fuzzy_random_functions}.

The computation of $\tilde{Q}_1$ amounts to computing its $\alpha$-cuts $S_{\alpha}^{\tilde{Q}_1}$ at various levels $\alpha \in [0,1]$. It requires evaluating the PDE solution \eqref{FSPDE_1D_sol} at a fixed point $x=L$ and for $M_s$ realizations $\{ y^{(i)} \}_{i=1}^{M_s}$ of $y\sim {\mathcal N}(0,1)$. 
The solution \eqref{FSPDE_1D_sol} at $x=L$ and a fixed realization $y^{(i)}$ is the integral of a fuzzy field $a^{-1} (\xi, y^{(i)}, \tilde{\bf z})$, with $\xi \in [0,L]$, over a crisp interval $[0,L]$. We approximate the crisp integral by a quadrature, such as the midpoint rule, and write
\vspace{-.3cm}
\begin{equation*}
u(L, y^{(i)},\tilde{\bf z}) = \int_0^L a^{-1}(\xi, y^{(i)}, \tilde{\bf z}) \, d\xi \approx h \, \sum_{j=1}^{N_h} a^{-1}(x_j, y^{(i)}, \tilde{\bf z}), \ \ \ x_j = (j-\frac1{2}) h, \ \ \ h= \frac{L}{N_h}.
\vspace{-.3cm}
\end{equation*}
Following Algorithm \ref{ALG_fuzzy_random_functions}, we employ the standard Monte Carlo sampling and write
\vspace{-.2cm}
\begin{equation*}
Q_1 (\tilde{\bf z}) = {\mathbb E}\biggl[ u(L, y^{(i)},\tilde{\bf z}) \biggr]  \approx 
\frac{1}{M_s} \, \sum_{i=1}^{M_s} u(L, y^{(i)},\tilde{\bf z}) \approx
\frac{h}{M_s} \, \sum_{i=1}^{M_s} \sum_{j=1}^{N_h} a^{-1}(x_j, y^{(i)}, \tilde{\bf z}).
\vspace{-.2cm}
\end{equation*}
Note that we can alternatively employ other Monte Carlo sampling strategies \cite{Giles:11,QMC:11,Multiindex:16,MOMC:18} or spectral stochastic techniques \cite{Xiu_Karniadakis:02,BTZ:05,Xiu_Hesthaven,Motamed_etal:13} to approximate the expectation. 
Finally, following Algorithm \ref{ALG_fuzzy_functions}, we perform the addition of $M_s N_h$ fuzzy functions $\{ a^{-1}(x_j, y^{(i)}, \tilde{\bf z}) \}$, with $j=1, \dotsc, N_h$ and $i = 1, \dotsc, M_s$, to get:
\vspace{-.2cm} 
\begin{equation}\label{S_Q1}
S_{\alpha}^{\tilde{Q}_1} = \Bigl[ \min_{{\bf z} \in S_{\alpha}^{\tilde{\bf z}}}  \bigl( \frac{h}{M_s} \, \sum_{i=1}^{M_s} \sum_{j=1}^{N_h} a^{-1}(x_j, y^{(i)}, {\bf z})  \bigr), \, \max_{{\bf z} \in S_{\alpha}^{\tilde{\bf z}}}  \bigl( \frac{h}{M_s} \, \sum_{i=1}^{M_s} \sum_{j=1}^{N_h} a^{-1}(x_j, y^{(i)}, {\bf z})  \bigr) \Bigr].
\vspace{-.2cm}
\end{equation}
We notice that since all $M_s N_h$ fuzzy functions $\{ a^{-1}(x_j, y^{(i)}, \tilde{\bf z}) \}$ are fully interactive, i.e. they are all functions of the same fuzzy vector $\tilde{\bf z}$, the $\alpha$-cuts are obtained by the extrema of the sum of the terms, rather than the sums of the extrema. The latter would give conservative intervals overestimating the true $\alpha$-cuts. 
After computing various $\alpha$-cuts \eqref{S_Q1} at different $\alpha$ levels we can construct the membership function of $\tilde{Q}_1$ by interpolation. 
Figure \ref{Ex_1D_Q1} shows the membership functions of $\tilde{Q}_1$ for two cases of interaction, and with
\begin{equation}\label{ex1_triangular_fuzzy}
\tilde z_1 =  \langle z_1^l, z_1^m, z_1^r  \rangle = \langle 1.00, 1.06,1.20  \rangle, \quad \tilde z_2 = \langle z_2^l, z_2^m, z_2^r  \rangle = \langle 0.10, 0.13,0.20  \rangle.
\end{equation}
We observe that $\mu_{\tilde{Q}_1}$ corresponding to non-interactive input fuzzy variables contains $\mu_{\tilde{Q}_1}$ corresponding to fully interactive fuzzy variables, as expected. 
\begin{figure}[!h]
\vspace{-.3cm}
\center
\includegraphics[width=6.4cm,height=3.7cm]{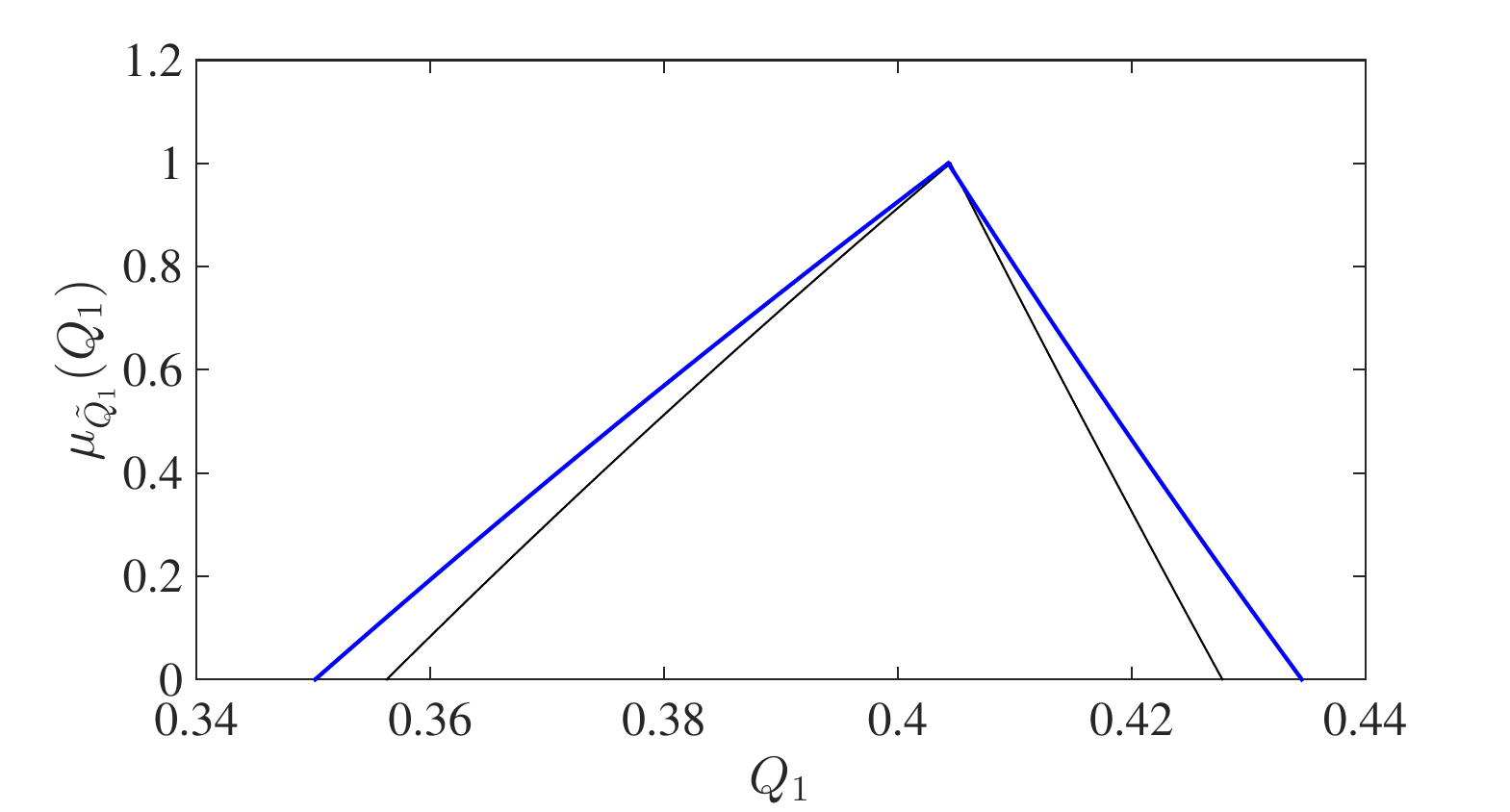}
\vspace{-.3cm}
    \caption{Membership functions of $\tilde{Q}_1$ when the input fuzzy variables are non-interactive (blue thick curves) and fully interactive (black think curves). The former contains the latter, as expected.}
    \label{Ex_1D_Q1}
 \end{figure}

\vspace{-.1cm}

The computation of the fuzzy field $\tilde{Q}_2$ is similar to that of $\tilde{Q}_1$ for different $x$ values. Figure \ref{Ex_1D_Q2} shows the fuzzy field $\tilde{Q}_2$ versus $x \in [1.8,2]$ with gray-scale colors representing the membership degrees, ranging from 0 (white color) to 1 (black color), in both non-interactive (left) and fully interactive (right) cases. We use the same values of parameters as those in \eqref{ex1_triangular_fuzzy}. While both cases result in similar fuzzy fields, the field obtained by non-interactive fuzzy variables does contain the field obtained by fully interactive fuzzy variables, as expected.
\begin{figure}[!h]
\vspace{-.5cm}
\center
\subfigure[Non-interactive fuzzy variables]{\includegraphics[width=5.5cm,height=4cm]{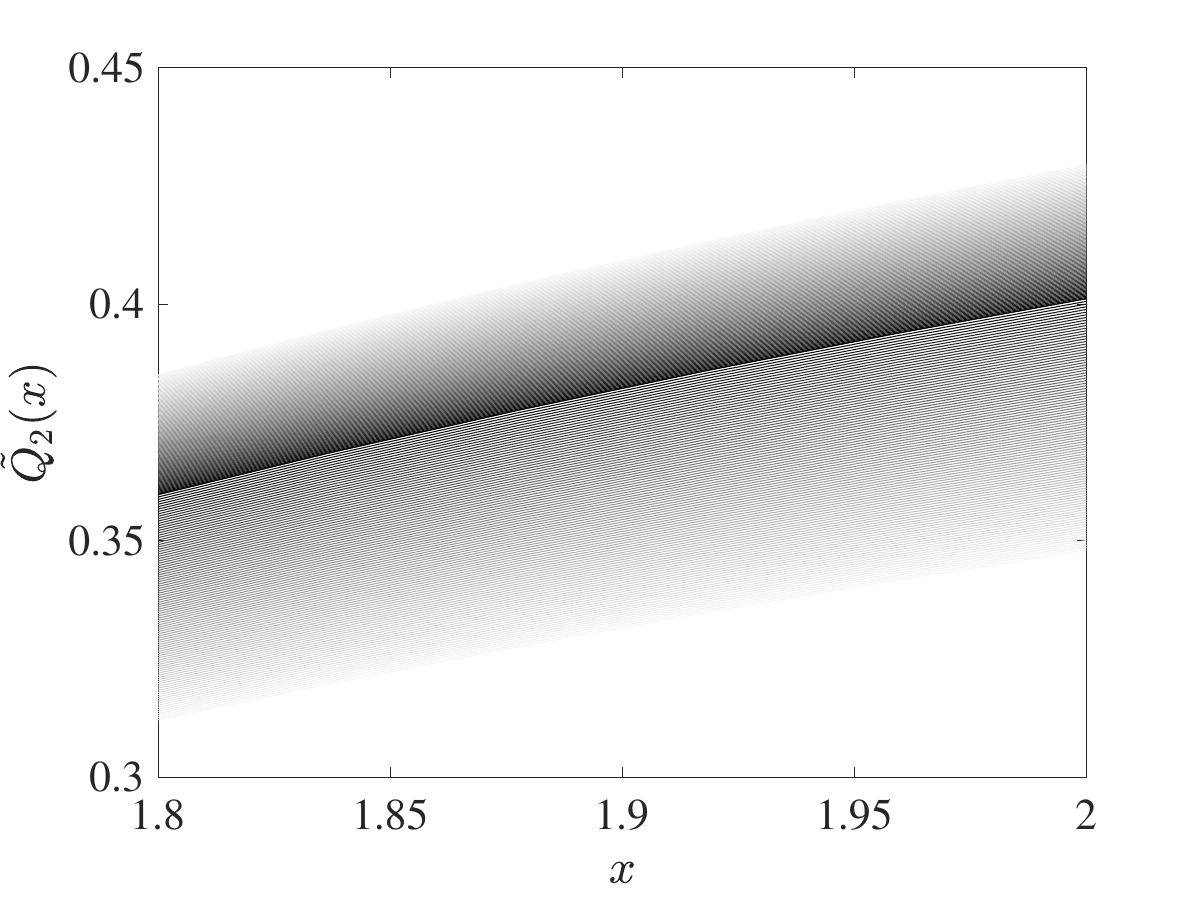}}
 \hskip .1cm
\subfigure[Fully interactive fuzzy variables]{\includegraphics[width=5.5cm,height=4cm]{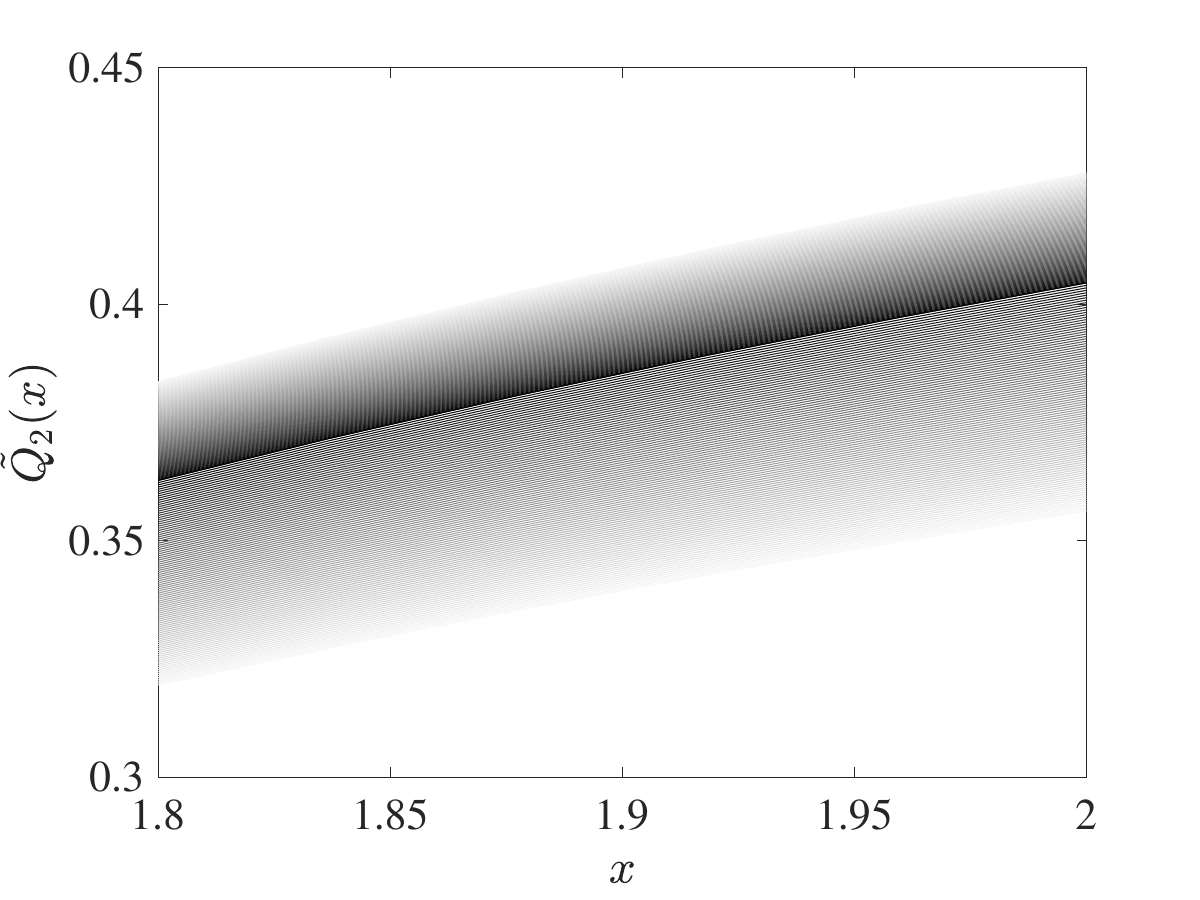}}\vspace{-.4cm}
    \caption{The fuzzy field $\tilde{Q}_2(x)$ versus $x$ with gray-scale colors representing the membership degrees, ranging from 0 (white color) to 1 (black color), with non-interactive (left) and fully interactive (right) input fuzzy variables. The fuzzy field in (a) contains the fuzzy field in (b).}
    \label{Ex_1D_Q2}
 \end{figure}

The computation of the fuzzy-stochastic function $\tilde{Q}_3(y)$ amounts to computing its fuzzy CDF. We follow the approach outlined in Algorithm \ref{ALG_fuzzy_random_functions}. For each fixed $Q_{3,0} \in [0.2,0.6]$, we compute the $\alpha$-cuts $S_{\alpha}^{\tilde{F}}$ of $\tilde{F}(Q_{3, 0}) = {\mathbb E}[{\mathbb I}_{[Q_3(y,\tilde{\bf z}) \le Q_{3,0}]}]$ and then construct the fuzzy CDF of $\tilde{Q}_3$. This corresponds to $q(y,\tilde{\bf z}) ={\mathbb I}_{[Q_3(y,\tilde{\bf z}) \le Q_{3,0}]}$ in Algorithm \ref{ALG_fuzzy_random_functions}. We use the parameter values in \eqref{ex1_triangular_fuzzy}. 
Figure \ref{Ex_1D_Q3} shows the fuzzy CDF of $\tilde{Q}_3$ with gray-scale colors representing the membership degrees, ranging from 0 (white color) to 1 (black color), in both non-interactive (left) and fully interactive (right) cases. 
To each membership degree (or $\alpha$-level), there corresponds one lower and one upper envelope, forming a p-box. We observe a nested set of p-boxes at different $\alpha$-levels: p-boxes at upper levels of plausibility/possibility (higher $\alpha$-levels) are contained inside p-boxes at lower levels of plausibility/possibility (lower $\alpha$-levels). 
Again, we observe that $\tilde{F}({Q}_3)$ corresponding to non-interactive input fuzzy variables contains $\tilde{F}({Q}_3)$ corresponding to fully interactive fuzzy variables. 
\begin{figure}[!h]
\vspace{-.57cm}
\center
\subfigure[Non-interactive fuzzy variables]{\includegraphics[width=6.4cm,height=3.7cm]{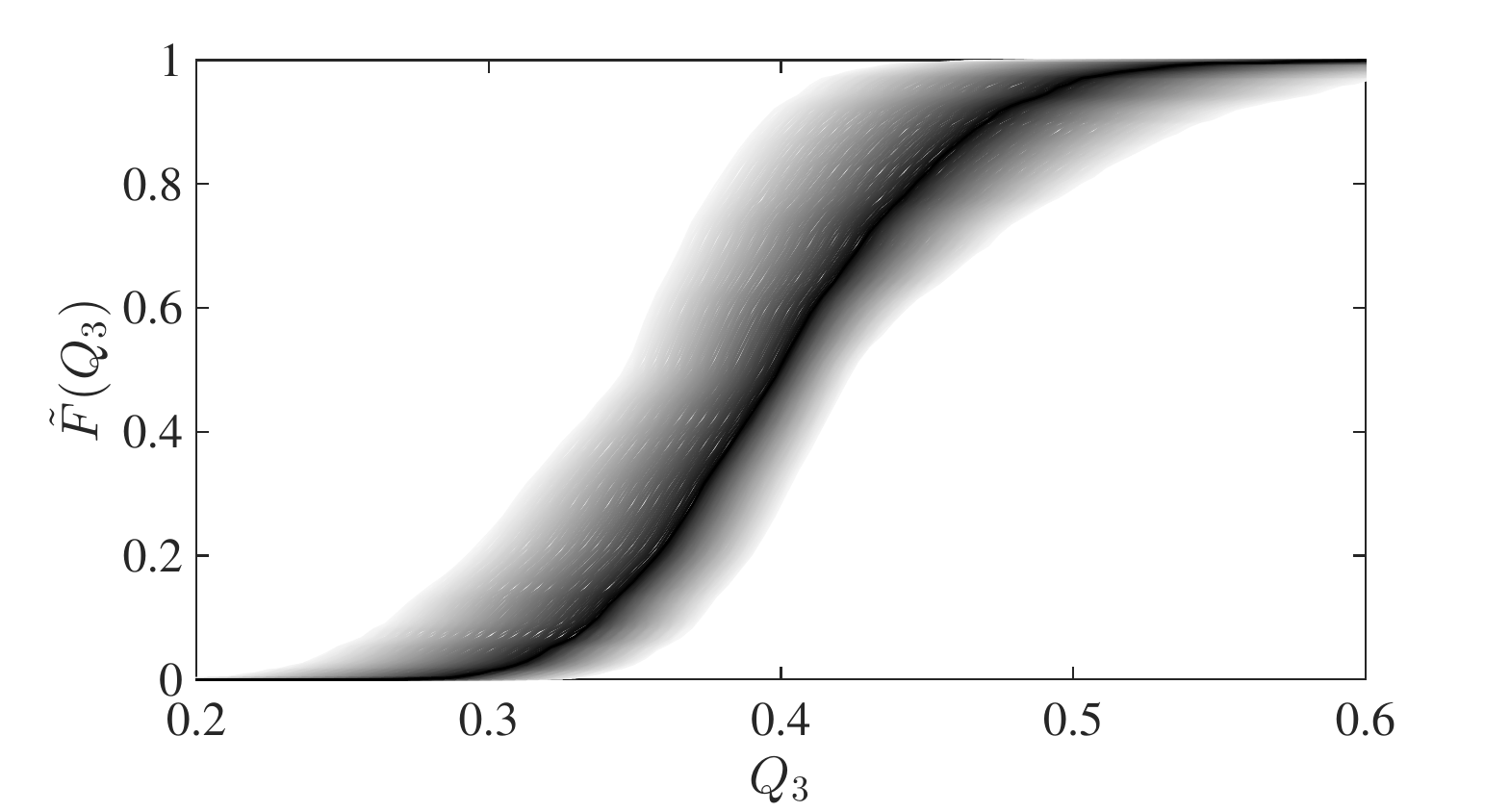}}
 \hskip .1cm
\subfigure[Fully interactive fuzzy variables]{\includegraphics[width=6.4cm,height=3.7cm]{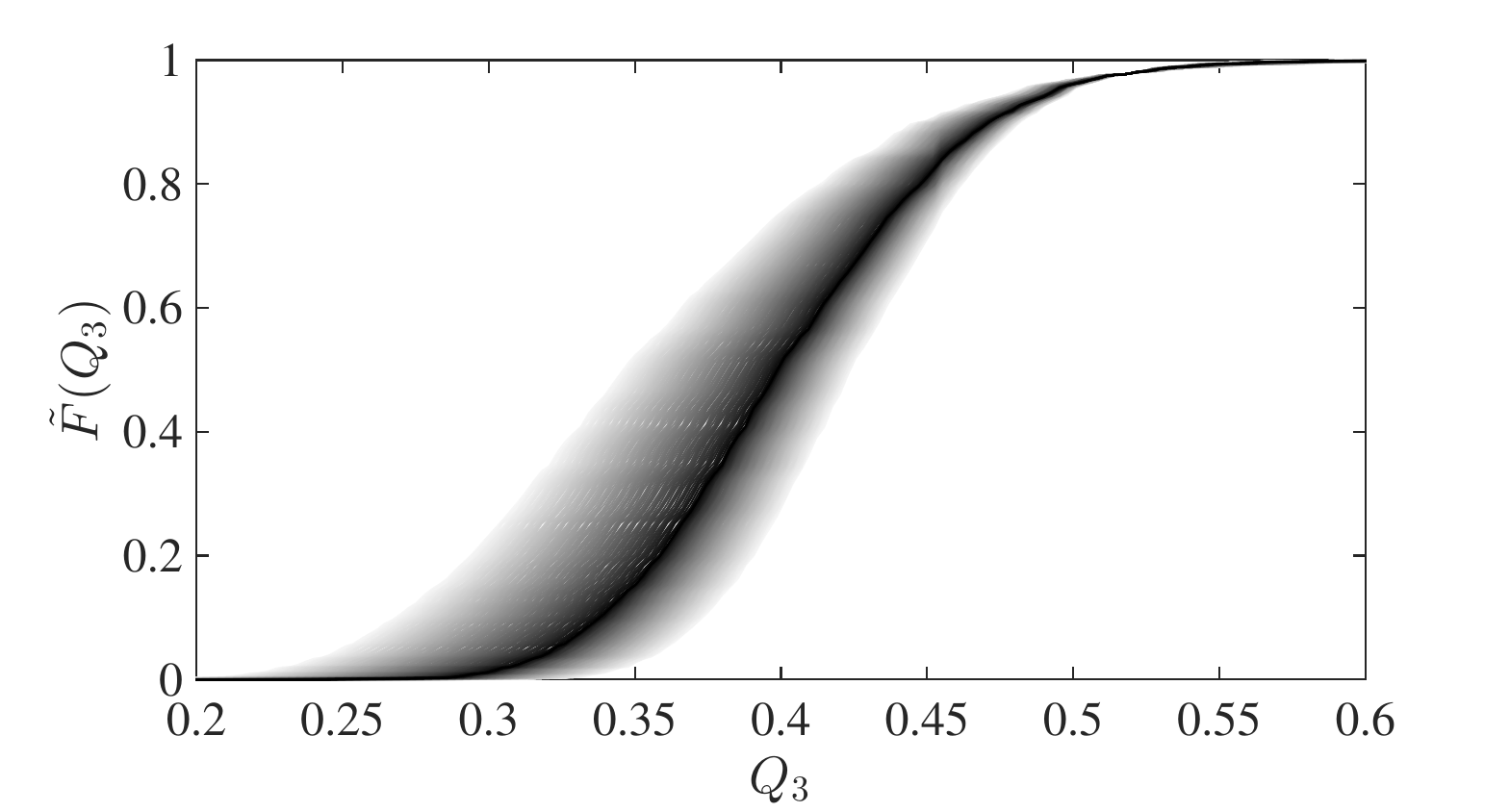}}
\vspace{-.4cm}
    \caption{The fuzzy CDF of $\tilde{Q}_3$ with gray-scale colors representing the membership degrees, ranging from 0 (white color) to 1 (black color), with non-interactive (left) and fully interactive (right) input fuzzy variables. To each membership degree, there corresponds one lower and one upper envelope, forming a p-box. A fuzzy CDF may then be viewed as a nested set of p-boxes at different levels of plausibility/possibility. The fuzzy CDF in (a) contains the fuzzy CDF in (b).}
    \label{Ex_1D_Q3}
 \end{figure}


\subsection{Numerical example 2}
\label{sec:numerics2}


We next consider an engineering problem in materials science: the response of fiber-reinforced polymers to external forces. This example demonstrates the applicability of fuzzy-stochastic PDEs to real-world problems. We will in particular show how fuzzy-stochastic PDE parameters can be constructed based on real measurement data. The construction will be justified and validated by showing that the PDE-generated outputs accurately capture variations in the true quantities obtained by real data. To this end, we consider a small piece of HTA/6376 fiber composite \cite{Babuska_etal:1999,BMT:14,BM:16}, consisting of four plies containing $13688$ carbon fibers with a volume fraction of $63 \%$ in epoxy matrix. Figure \ref{fiber_map}(top) shows a map of fibers in an orthogonal cross section of the composite obtained by an optical microscope. The modulus of elasticity of the composite constituents, given by the manufacturer, are $a_{\text{fiber}} = 24$ [GPa] and $a_{\text{matrix}} = 3.6$ [GPa]. We process this binary map and convert it into a form suitable for statistical analysis, based on which the modulus of elasticity of the composite $a$, which appears in \eqref{FSPDE_1D}, will be characterized. We follow \cite{BM:16} and discretize the rectangular cross section of the composite into a uniform mesh of square pixels of size $1 \times 1 \, \mu {\text{m}}^2$. We then construct a binary data structure for the composite's modulus of elasticity, where we mark the presence or absence of fiber at every pixel by 1 ($a = a_{\text{fiber}}$) or 0 ($a = a_{\text{matrix}}$), respectively, assuming that fibers are perfectly circular. We next divide the rectangular domain into $50$ thin horizontal strips (or bars) of width $10 \, \mu$m. This gives us $50$ thin bars of length $1700 \, \mu$m, labeled $i=1,\dotsc,50$. Each bar is divided into 170 square elements of size $10 \times 10 \, \mu {\text{m}}^2$, labeled $j=1, \dotsc, 170$. On each element $j$, we take the harmonic average over its $10 \times 10$ pixels and compute a value $a_i(x_j)$ for modulus of elasticity. 
We repeat the process for all $50$ bars and all $170$ elements of each bar and obtain $50$ one-dimensional discrete samples $\{ a_{i}(x_j) \}_{i=1}^{50}$ of the uncertain parameter $a$ at the discrete points $\{ x_j \}_{j=1}^{170}$; see Figure \ref{fiber_map}(bottom). The above process is accurate within 1$\%$ in predicting the overall volume fraction obtained by an analytic approach. We refer to \cite{BM:16} for details.
%
%
%
%
%
\begin{figure}[!h]
\vspace{-.2cm}
\center
\subfigure{\includegraphics[width=7cm,height=2.2cm]{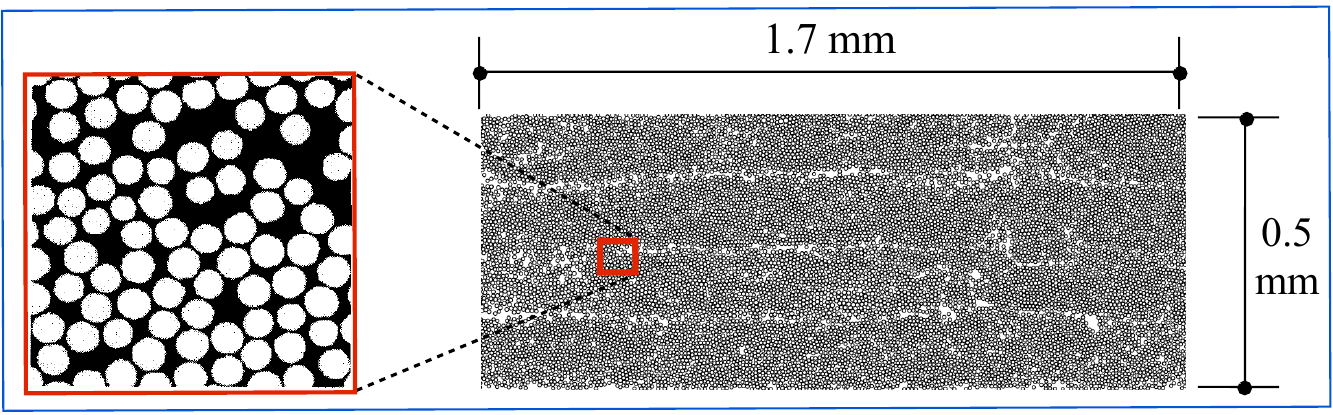}}
\\
\vspace{-.2cm}
\subfigure{\includegraphics[width=7cm,height=2.0cm]{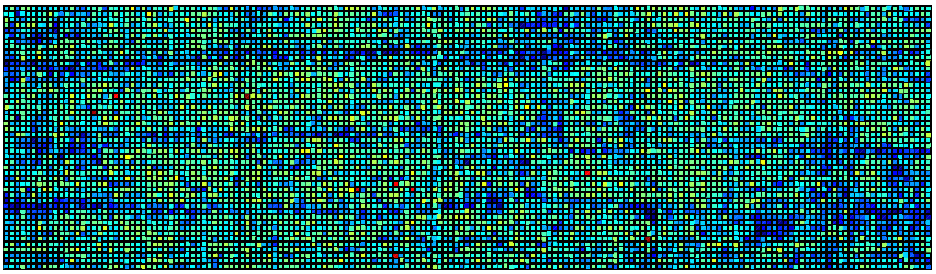}}
\vspace{-.3cm}
    \caption{Top: a binary optical image of a small piece of a fiber composite. Bottom: modulus of elasticity of composite over a regular mesh of $170 \times 50$ square elements of size $10 \times 10 \, \mu {\text{m}}^2$.}
\label{fiber_map}
\end{figure}


Motivated by the form of the exact solution \eqref{FSPDE_1D_sol}, we perform statistical analysis directly on the compliance $b = a^{-1}$. We approximate the first four moments of $b$ by sample averaging using the samples $\{ b_{i}(x_j) \}_{i=1}^{50} = \{ a_i^{-1}(x_j) \}_{i=1}^{50}$. At each discrete point $x_1, \dotsc, x_{170}$, we use these $50$ samples and compute the sample mean $z_1(x_j)$, sample standard deviation $z_2(x_j)$, sample skewness $z_3(x_j)$, and sample excess kurtosis $z_4(x_j)$. Figure \ref{sample_moments} shows the sample moments of $b$ versus $x$ and their histograms. 
\begin{figure}[!h]
\vskip -.55cm
  \begin{center}
        \subfigure[sample mean]{\includegraphics[width=6.2cm,height=4.3cm]{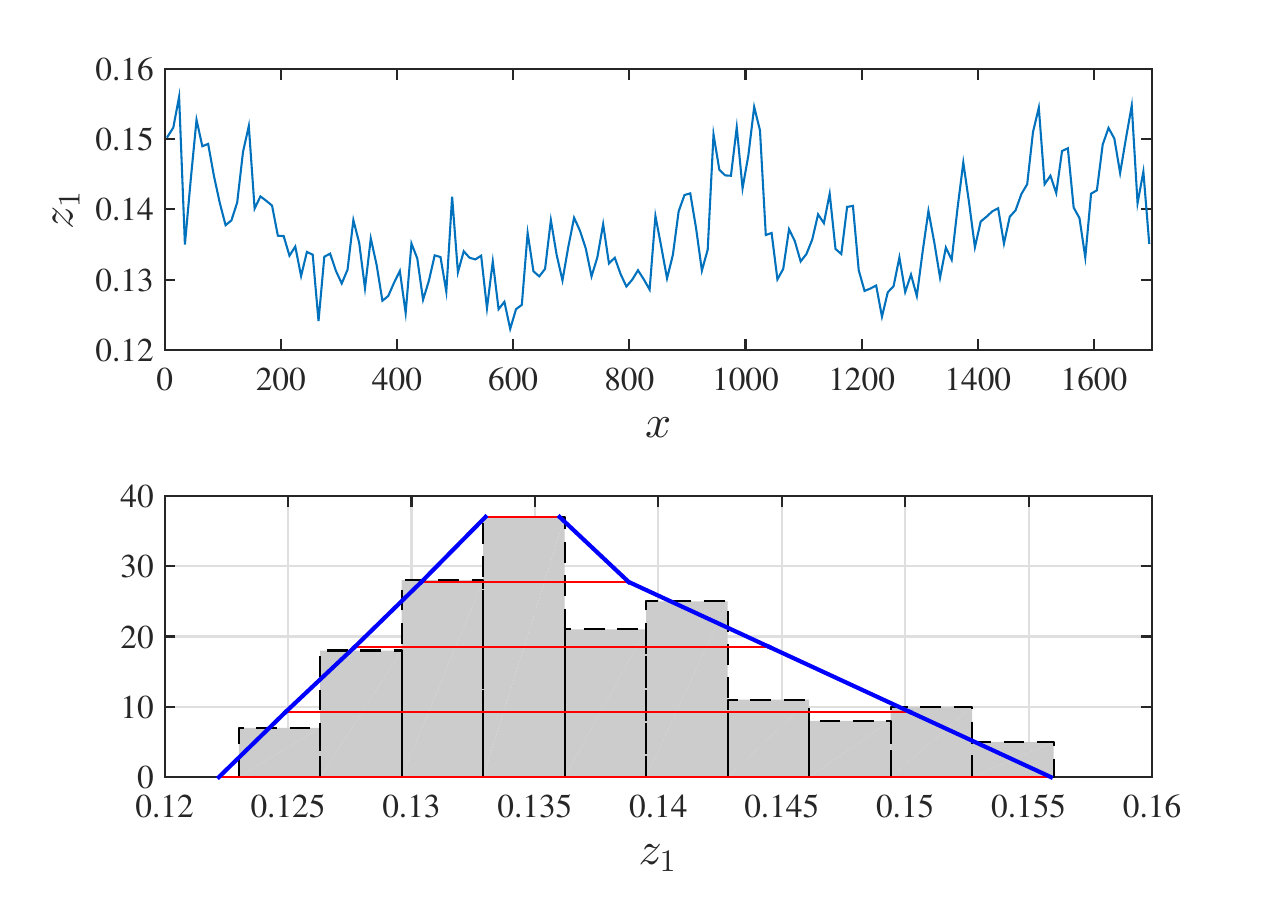}}
        \subfigure[sample standard deviation]{\includegraphics[width=6.2cm,height=4.3cm]{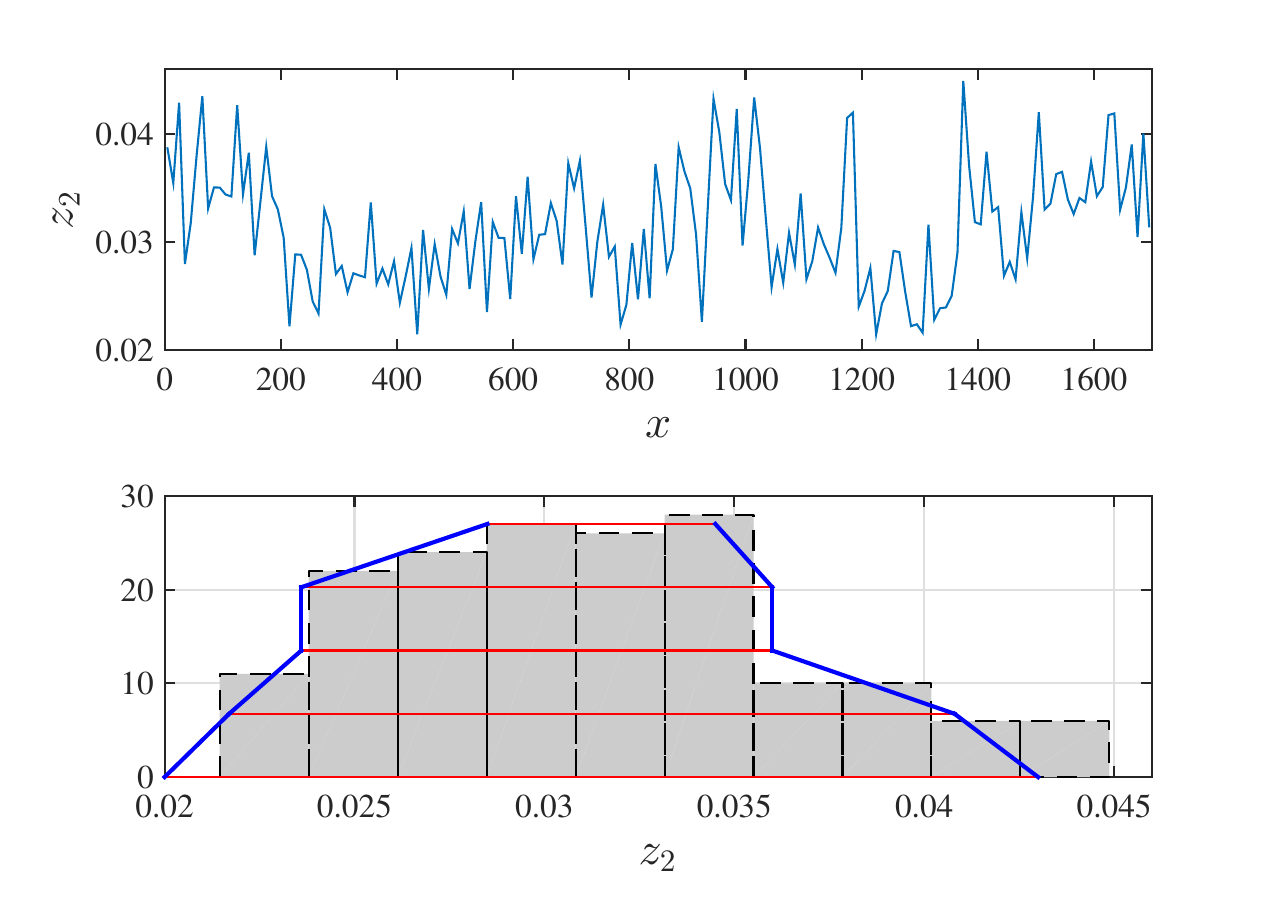}}\\
        \vspace{-.4cm}
        \subfigure[sample skewness]{\includegraphics[width=6.2cm,height=4.3cm]{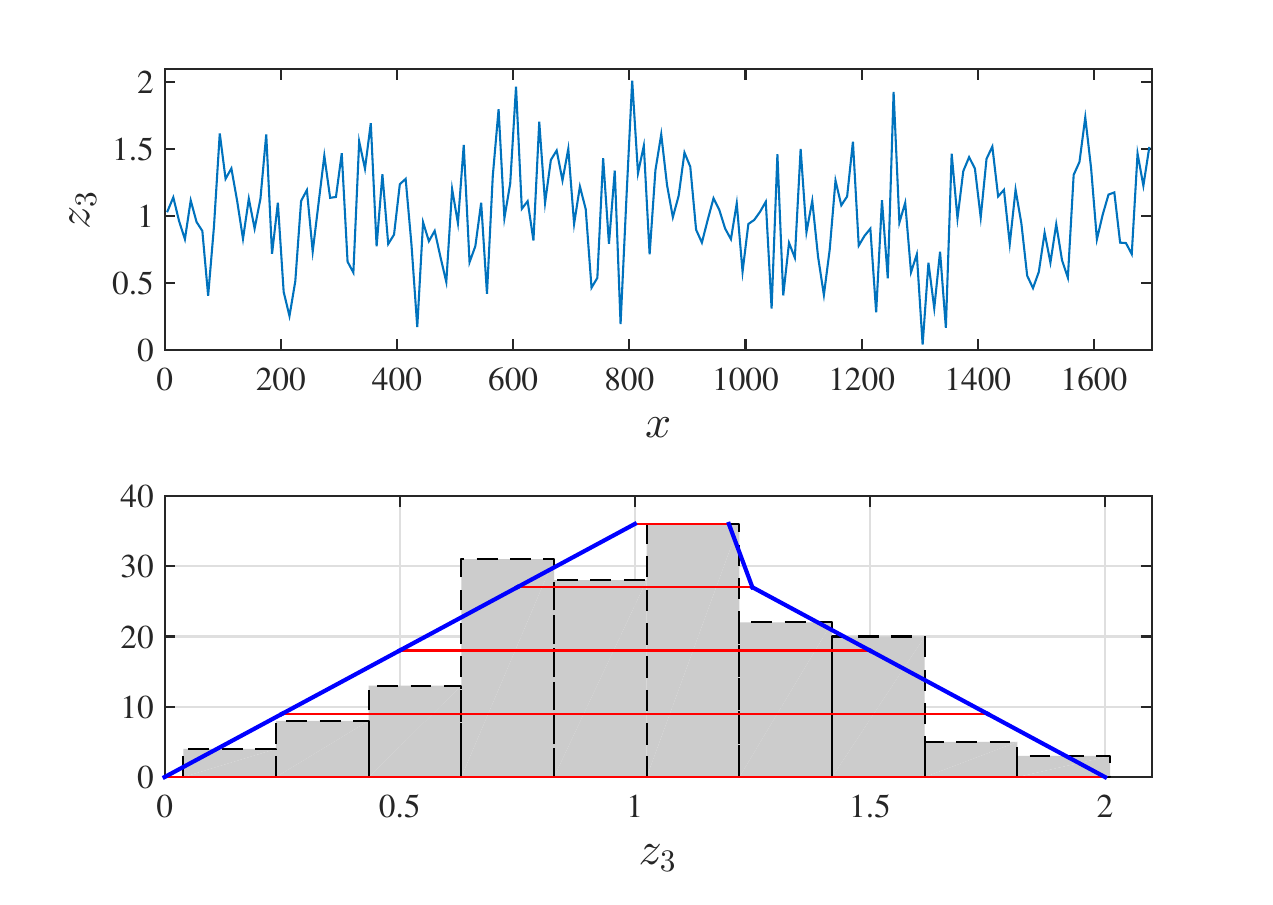}}
        \subfigure[sample excess kurtosis]{\includegraphics[width=6.2cm,height=4.3cm]{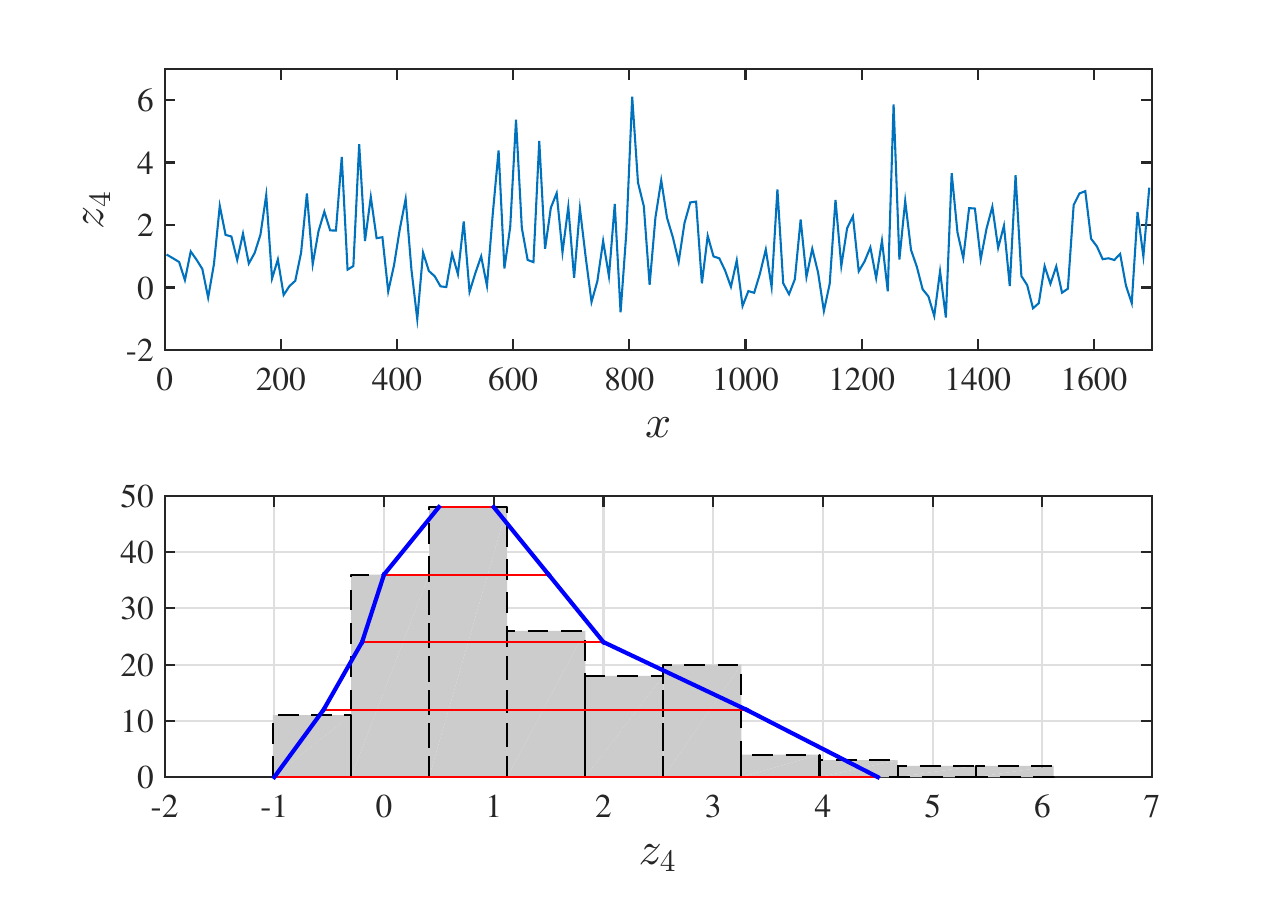}}
\vskip -0.35cm        
\caption{Sample moments of the field $b(x) = a^{-1}(x)$ versus $x$ and their histograms.}
\label{sample_moments}
  \end{center}
\end{figure}

A common engineering practice is to model material parameters, such as the compliance $b$, by stationary Gaussian random fields; see e.g. \cite{XU_Graham-Brady:05,Yin_etal:09,Liu_etal:10,Papadrakakis_Stefanou:14}. However, as Figure \ref{sample_moments} shows, the moments are not constant and vary in $x$, and hence the parameter $b$ cannot be accurately represented by stationary random fields. Moreover, the field is not Gaussian, since its skewness is not zero. One option within the framework of precise probability is to construct a non-stationary non-Gaussian random field. 
This option would heavily rely on the availability of abundant high-quality data to correctly capture the highly oscillatory moments. In reality such data are not available, for example when characterizing permeability of porous rock layers or compliance of composites containing millions of fibers. Even if-in the non-realistic absence of epistemic uncertainty-we do represent $b(x)$ by a non-stationary random field, we would face a stochastic multiscale problem that may not be tractable. This is due to the well-known fact that stochastic homogenization, necessary to treat random multiscale parameters, is applicable only to stationary fields \cite{Souganidis:99}. 
A second option is to construct an imprecise probabilistic model by considering a family of distributions, as done for example in interval probability \cite{Weichselberger:2000} and optimal UQ \cite{OUQ:2013}. Although these approaches can handle epistemic uncertainty, they may suffer from the loss of information and the non-propagation of uncertainty across multiple scales. Intuitively, this is because such models may not capture all input information that is available to us and hence cannot propagate the whole information. From the sample moments in Figure \ref{sample_moments} it is obvious that one would lose information if the moments are modeled by intervals. For instance if one models the first moment with the interval $[0.123, 0.156]$, then the information that the value $0.135$ is more possible/plausible would be lost. See also \cite{OUQ:2013} for an illustrative example of the non-propagation of uncertainty across multiple scales. Among the imprecise probabilistic models, second-order hierarchical models \cite{Gelman_etal:04} may be capable of treating this multiscale problem. In this case one may be able to model $b$ by a random field with random moments. 
Here, we propose another alternative beyond the framework of probability. In order to accurately model and propagate uncertainty and afford multiscale strategies, we propose to model the parameter $b$ by a {\it fuzzy-stationary random field} as follows. 

We first {\it fuzzify} the moments of $b$: we use the histograms of the sample moments to construct membership functions $\mu_{\tilde{z}_1}, \mu_{\tilde{z}_2}, \mu_{\tilde{z}_3}, \mu_{\tilde{z}_4}$. This can be done, for instance, by the method of least squares and with piecewise-linear regression functions (the thick blue lines in Figure \ref{sample_moments}). We then normalize the regression functions so that the maximum membership function value is one. It is to be noted that this procedure generates an initial draft for membership functions. We may need to conduct a subsequent modification and make additional corrections, for instance if the initial draft is not quasi-concave. Here, we use five $\alpha$-levels (0, 0.25, 0.5, 0.75, 1) for the construction and obtain four decagonal fuzzy variables, described by their ten vertices
\begin{align*}
\tilde{z}_1 &= \langle 
0.1222, \, 0.1249, \, 0.1277, \, 0.1304, \, 0.1330, \, 0.1360, \, 0.1388, \, 0.1445, \, 0.1502, \, 0.1559 
\rangle, \\
\tilde{z}_2 &= \langle 
0.0200, \, 0.0217, \, 0.0236, \, 0.0236, \, 0.0285, \, 0.0345, \, 0.0360, \, 0.0360, \, 0.0408, \, 0.0430
\rangle, \\
\tilde{z}_3 &= \langle 0, \, 0.25, \, 0.50, \, 0.75, \, 1.00, \, 1.20, \, 1.25, \, 1.50, \, 1.75, \, 2.00
\rangle, \\
\tilde{z}_4 &= \langle 
-1.00, \, -0.55, \, -0.20, \, 0, \, 0.50, \, 1.00, \, 1.50, \, 2.00, \, 3.30, \, 4.50
\rangle.
\end{align*}



We note that the four fuzzy variables are fully interactive, because the four moments are obtained from the same set of data $\{ b_{i}(x_j) \}_{i=1}^{50} $ and hence are directly related to each other, that is, higher moments are obtained from lower moments. This will result in a reduction in fuzzy space dimension. While we have a vector of four fuzzy variables $\tilde{\bf z}=(\tilde{z}_1,\tilde{z}_2,\tilde{z}_3,\tilde{z}_4 )$, their joint $\alpha$-cut $S_{\alpha}^{\tilde{\bf z}}$ is a piecewise linear one-dimensional curve embedded in ${\mathbb R}^4$. Similar to the numerical example 1 and using the arc length parameterization of the curve, we can represent $S_{\alpha}^{\tilde{\bf z}}$ by a piecewise linear map. 

We then construct a fuzzy-stochastic translation field to model the compliance:
\begin{equation}\label{b_FS}
b(x, {\bf y}, \tilde{\bf z}) = \Psi^{-1}({\tilde {\bf z}}) \circ \Phi (G(x,{\bf y})).
\end{equation}
Here, $\Psi (\tilde{\bf z})$ is the CDF of a four-parameter beta distribution determined by the four fuzzy moments, $\Phi$ is the standard normal CDF, and $G(x,{\bf y})$ is a standard Gaussian field, approximated  by the truncated KL expansion: $
G(x,{\bf y}) \approx \sum_{j=1}^{m} \sqrt{\lambda_j} \, \phi_j(x) \, y_j$, with $y_j \sim {\mathcal N}(0,1)$ and the eigenpairs $\{(\lambda_j, \phi_j(x) )\}_{j=1}^m$ of the deterministic covariance 
\begin{equation}\label{cov_function}
C(x_1,x_2) = \exp{\bigl(\frac{-|x_1 - x_2|^{p}}{2 \, \ell^2}\bigr)}, \qquad p=2, \quad \ell = 20 \, \mu m.
\end{equation}
We note that the selection of the covariance function and its parameters, such as the exponent $p$ and correlation length $\ell$, must be based on a systematic calibration-validation strategy; see \cite{BNT:08_Sandia,BM:16}. As we will see in Figure \ref{Ex2_1D_Q45}, the choice \eqref{cov_function} here delivers output quantities which fit the true quantities. Here, we choose $m=27$ KL terms to preserve $90\%$ of the unit variance of the Gaussian field $G$.

The construction \eqref{b_FS} has several advantages. First, it benefits from the simplicity of working with a stationary Gaussian field $G(x,{\bf y})$. Moreover, by applying the inverse of $\Psi$ on $\Phi (G) \in [0, 1]$, we obtain a field that achieves the target marginal fuzzy CDF $\Psi(\tilde{\bf z})$. Finally, since the fuzzy moments are $x$-independent, the field \eqref{b_FS} may be thought of as a {\it fuzzy-stationary} random field. One can hence employ global-local homogenization methods \cite{BM:16} and perform multiscale computations if needed.

We now let $L=1.7 \times 10^{-3}$m and consider the problem \eqref{FSPDE_1D} with the fuzzy-stochastic parameter $a=b^{-1}$ given by \eqref{b_FS}. Hence, the analytical solution \eqref{FSPDE_1D_sol} reads 
$u(x, {\bf y}, \tilde{\bf z}) = \int_0^x b(\xi, {\bf y} , \tilde{\bf z}) \, d\xi$.

We consider the following QoIs
\begin{align*}
& \tilde{Q}_4(x) = Q_4(x,\tilde{\bf z}) = {\mathbb E}[u(x,{\bf y},\tilde{\bf z})], \\
& \tilde{Q}_5(y) = Q_5(y,\tilde{\bf z}) = u(L/4,y,\tilde{\bf z}), \\
& \tilde{Q}_6 = Q_6(\tilde{\bf z}) = P(u(L/4,{\bf y},\tilde{\bf z}) \ge u_{\text{cr}}).
\end{align*}
Here, $\tilde{Q}_4$ is a fuzzy field, $\tilde{Q}_5$ is a fuzzy-stochastic function, and $\tilde{Q}_6$ is a fuzzy failure probability. We now discuss the computation of the above three quantities.

The computation of $\tilde{Q}_4$ and $\tilde{Q}_5$ is similar to that of $\tilde{Q}_2$ and $\tilde{Q}_3$ in Section \ref{sec:numerics1}. 
Figure \ref{Ex2_1D_Q45} shows the fuzzy field $\tilde{Q}_4(x)$ versus $x \in [0,1000] \mu$m (left) and the fuzzy CDF of $\tilde{Q}_5$ for three membership degrees $\alpha=0,0.5,1$. We also compute and plot the ``true'' quantities directly obtained by the real data, i.e. the 50 discrete samples, as follows. First, we choose $N_b=20$ groups of samples, where each group consists of $M_b = 15$ different, randomly selected samples out of 50 discrete samples. For each group we then compute $M_b$ samples of the true quantity and then obtain their expected value (to compare with $\tilde{Q}_4$) and their CDF (to compare with $\tilde{Q}_5$). This gives us a set of $N_b$ benchmark solutions, referred to as the ``truth". It is to be noted that the variations in true quantities reflect the presence of epistemic uncertainty and justify the need for models beyond precise probability. For instance for $Q_5$ we obtain a range of distributions, hence forming a nested set of p-boxes, instead of one single distribution that one may obtain in the absence of epistemic uncertainty. Figure \ref{Ex2_1D_Q45} shows how accurately the computed quantities obtained by the proposed fuzzy-stochastic PDE model capture the variations in the true quantities.
\begin{figure}[!h]
\center
\subfigure{\includegraphics[width=6.4cm,height=3.8cm]{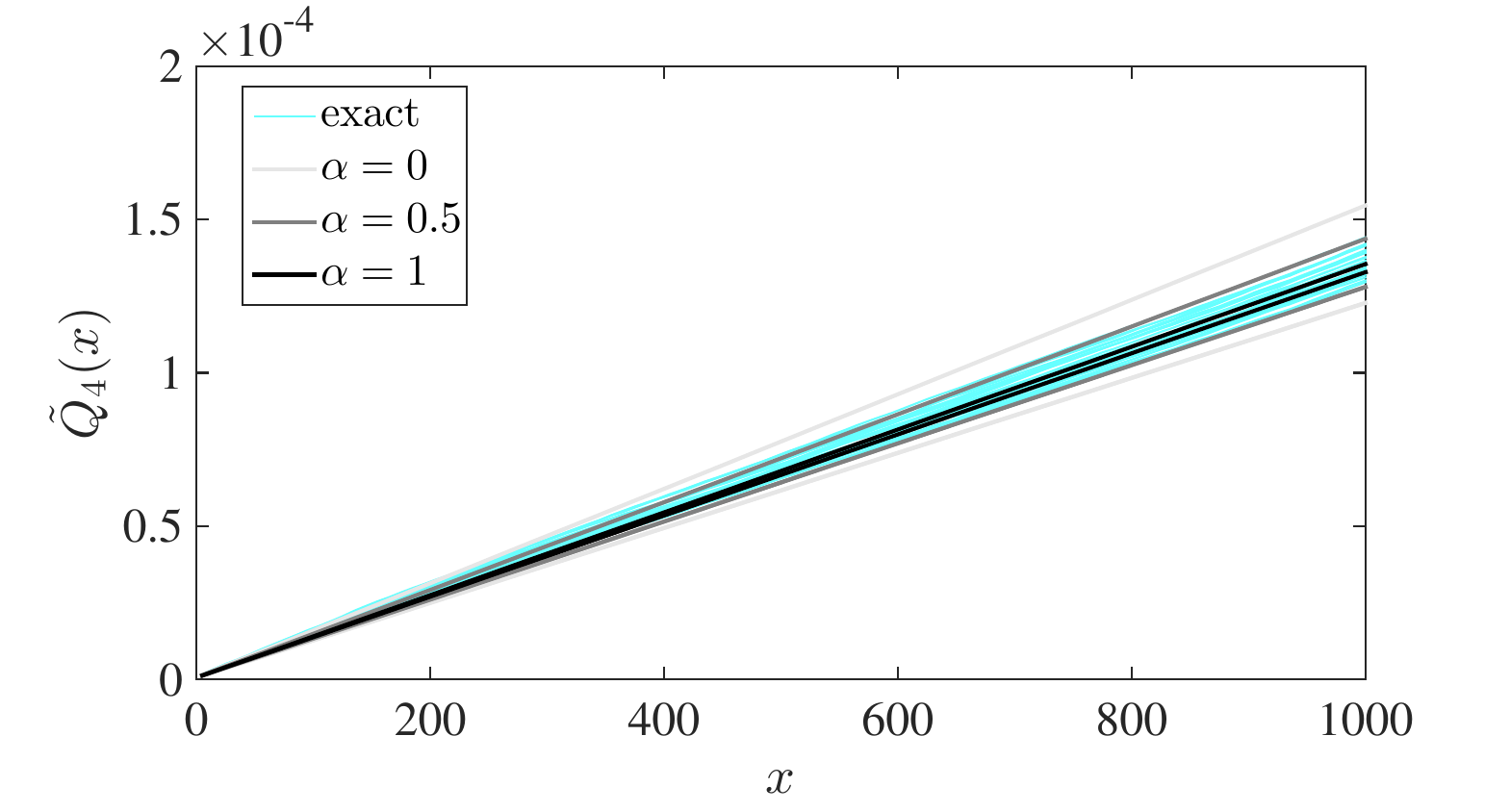}}
\hskip .1cm
\subfigure{\includegraphics[width=6.4cm,height=3.8cm]{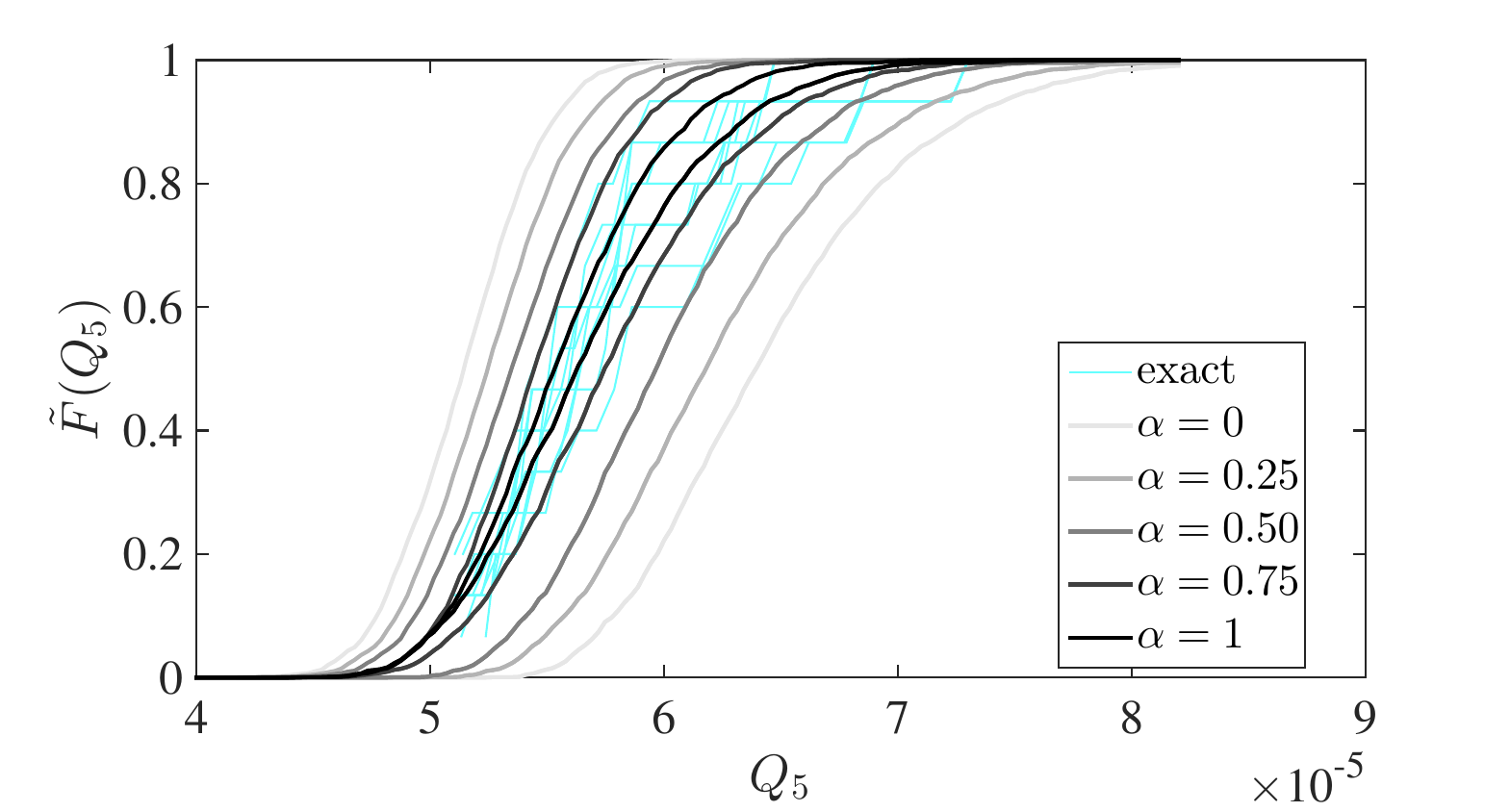}}
\vspace{-.6cm}
    \caption{The fuzzy field $\tilde{Q}_4(x)$ versus $x$ (left) and the fuzzy CDF of $\tilde{Q}_5$ (right). For comparison, the true quantities (thin turquoise curves) are included.}
    \label{Ex2_1D_Q45}
    \vskip -.4cm
 \end{figure}

The quantity $\tilde{Q}_6$ is the fuzzy probability of failure that would occur when the displacement $u$ at $x=L/4$ reaches a critical value $u_{\text{cr}}$. At every fixed $\alpha$-level, we first uniformly discretize the one-dimensional joint $\alpha$-cut $S_{\alpha}^{\tilde{\bf z}}$ into $M_f$ discrete points $\{ {\bf z}^{(k)} \}_{k=1}^{M_f} \in S_{\alpha}^{\tilde{\bf z}}$. 
We next set $g({\bf y},\tilde{\bf z}) := u_{\text{cr}} - u(L/4,{\bf y},\tilde{\bf z})$ and follow Algorithm \ref{ALG_fuzzy_random_functions} with $q({\bf y},\tilde{\bf z}) ={\mathbb I}_{[g({\bf y},\tilde{\bf z}) \le 0]}$. We use Monte Carlo sampling with $M_s$ realizations $\{ {\bf y}^{(i)} \}_{i=1}^{M_s}$ to approximate 
\vspace{-.5cm}
\begin{equation}\label{Q6_1}
Q_6({\bf z}^{(k)}) = {\mathbb E}[{\mathbb I}_{[g({\bf y},{\bf z}^{(k)}) \le 0]}] \approx \frac{1}{M_s} \sum_{i=1}^{M_s} {\mathbb I}_{[g({\bf y}^{(i)},{\bf z}^{(k)}) \le 0]}, \qquad k=1, \dotsc, M_f.
\vspace{-.2cm}
\end{equation}
The output $\alpha$-cut for $\tilde{Q}_6$ is then obtained by 
\begin{equation}\label{Q6_2}
S_{\alpha}^{\tilde{Q}_6} = 
 \Bigl[ \min_{k}  Q_6({\bf z}^{(k)}), \, \max_{k}  Q_6({\bf z}^{(k)}) \Bigr].
\end{equation}
Figure \ref{Ex2_1D_Q6} shows the membership function of $\tilde{Q}_6$ obtained from the $\alpha$-cuts given in \eqref{Q6_1}-\eqref{Q6_2} computed for five $\alpha$ levels $\alpha = 0, 0.25, 0.5, 0.75, 1$, with $u_{\text{cr}} = 6.9 \times 10^{-5} \, \mu$m, $M_f = 181$, and $M_s = 10^4$. 
%
\begin{figure}[!h]
\vspace{-.4cm}
\center
{\includegraphics[width=6.4cm,height=3.8cm]{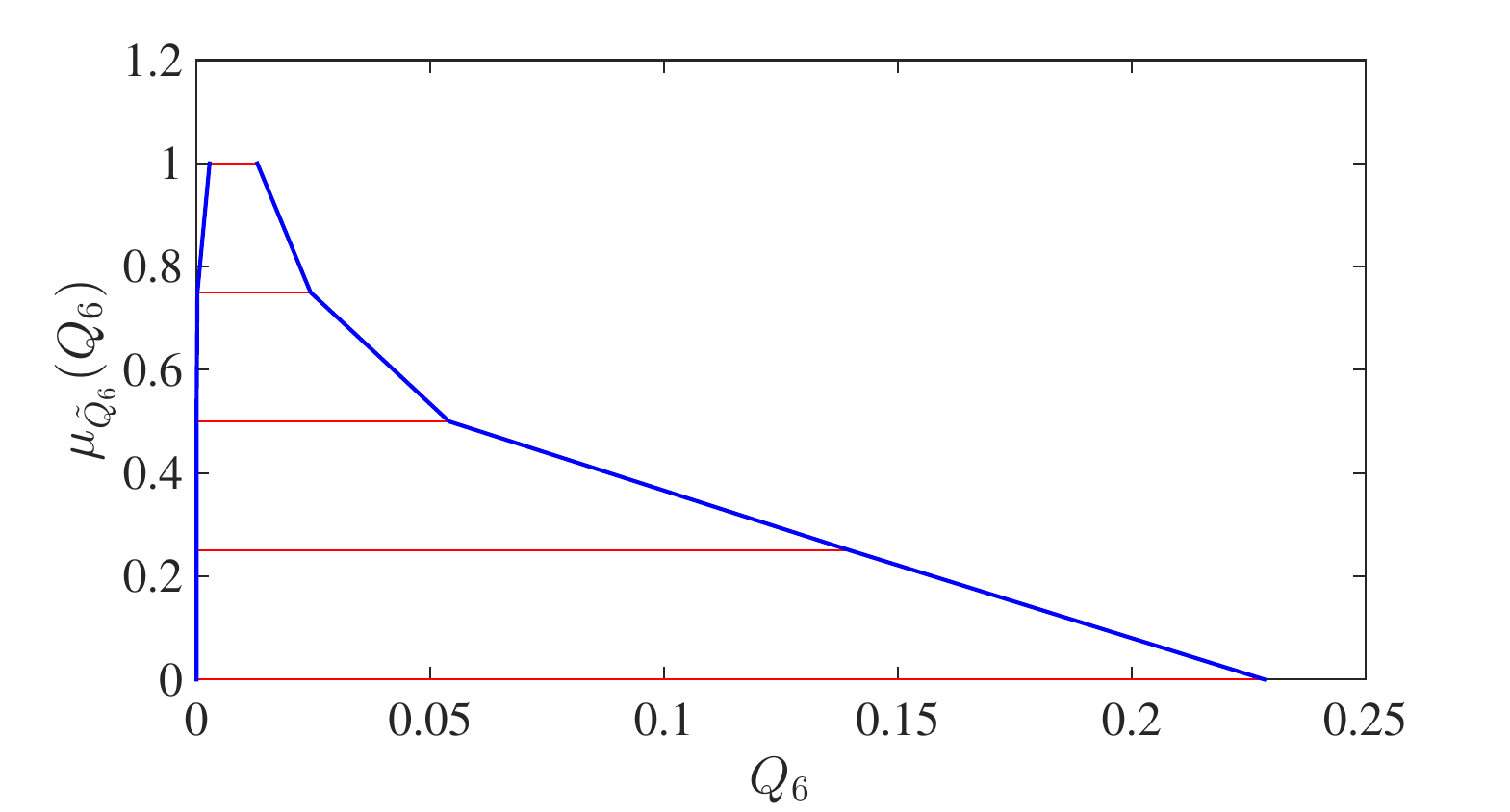}}
\vspace{-.2cm}
    \caption{The membership function of the fuzzy failure probability $\tilde{Q}_6$ and its five $\alpha$-cuts.}
    \label{Ex2_1D_Q6}
   \vskip -.3cm
 \end{figure}

Note that the additional nuanced information given through nested intervals at different levels of possibility of $\tilde{Q}_6$ is a direct result of the propagation of additional nuanced information available in the statistical moments in Figure \ref{sample_moments}. Such additional information may not be accounted for and hence would not be propagated though other imprecise probabilistic models, such as interval probabilities and optimal UQ. 
%
%
%
This is particularly important in ``certification problems", where we need to certify or decertify a system of interest. To illustrate this, let $\varepsilon_{\small{\text{TOL}}} = 0.1$ be the greatest acceptable failure probability $\tilde{Q}_6$, that is, the system is {\it safe} if $\tilde{Q}_6 \le \varepsilon_{\small{\text{TOL}}}$ and {\it unsafe} if $\tilde{Q}_6 > \varepsilon_{\small{\text{TOL}}}$. Suppose that the lower and upper bounds of the zero-cut of $\tilde{Q}_6$, i.e. $0$ and $0.2284$, represents the crisp lower and upper bounds obtained by an imprecise probabilistic approach. In this case since $0 < \varepsilon_{\small{\text{TOL}}} = 0.1 < 0.224$, then we cannot decide on the safety of the system, unless additional information will be provided. However, the additional nuanced information provided by the lower and upper bounds at different levels of possibility (i.e. different $\alpha$-levels) may help decision-makers. 
In fact the highest level of possibility (the most possible scenario) corresponding to the 1-cut in Figure \ref{Ex2_1D_Q6} suggests that the system may be safe. 


%
%
%

\subsection{Computational cost}
\label{sec:numerics_cost}


Consider a fuzzy-stochastic function $q({\bf y}, \tilde{\bf z})$, for example obtained by applying a combination of algebraic, integral, and differential operators on the solution $u({\bf x}, {\bf y}, \tilde{\bf z})$ to a fuzzy-stochastic PDE problem. Assume that we are interested in computing a fuzzy quantity $\tilde{Q} = {\mathbb E}[q({\bf y}, \tilde{\bf z})]$. The computation of one $\alpha$-cut $S_{\alpha}^{\tilde{Q}}$ requires $M_f$ function evaluations $Q({\bf z}^{(k)}) = {\mathbb E}[q({\bf y}, {\bf z}^{(k)})]$ at $M_f$ discrete points $\{ {\bf z}^{(k)} \}_{k=1}^{M_f} \in S_{\alpha}^{\tilde{\bf z}}$. At each discrete point the expectation of $q$ needs to be approximated by a sampling technique using $M_s$ samples. In total we need to solve $M = M_f \, M_s$ deterministic PDE problems. The size of $M$ depends mainly on the number of random variables $m$, the number of fuzzy variables $n$, and the regularity of $q$ with respect to ${\bf y}$ and $\bf z$. When $M$ is very large, e.g. in the absence of high regularity or when $m$ and $n$ are large, the computations may be prohibitively expensive. There are however practical situations where fuzzy-stochastic computations are feasible:
\begin{itemize}
\item[1.] In many applications we have a low-dimensional fuzzy space, i.e. $n \ll m$. A typical example is when we model an uncertain parameter, such as the compliance in Section \ref{sec:numerics2}, by a hybrid fuzzy-stochastic field. In this case, $n$ is usually 1 (if the moments are fully interactive) or 2-4 (if we use 2-4 non-interactive moments), while $m$ may be rather large depending on the correlation length of the field. As a result, fuzzy-stochastic computations are usually not much more expensive compared to solving purely stochastic problems. 

\item[2.] When $q$ is highly regular with respect to $({\bf y}, {\bf z})$ we can employ spectral methods on sparse grids instead of Monte Carlo sampling strategies in order to speed up the computations. The stochastic and fuzzy spaces are different considering we extract statistical information in the stochastic space and perform optimization in the fuzzy space. Therefore, thanks to high regularity, we may build a surrogate model of $q({\bf y},{\bf z})$ on the tensor product of two separate sparse grids (one on each space) using sparse interpolating polynomials; see e.g.  \cite{Motamed_etal:13,Motamed_etal:15}. 
This type of separation affords efficient extraction of statistical information and sparse optimization \cite{Interval:01,Incomplete_search:03}.

\item[3.] Current probabilistic models are usually not applicable to multiscale problems with highly oscillatory uncertain parameters. It is well known that stochastic homogenization, necessary to treat multiscale stochastic parameters, is applicable only to stationary random fields \cite{Souganidis:99}, while in many multiscale problems the moments are not constant. Another related problem with imprecise probabilistic models is the non-propagation of uncertainty across multiple scales \cite{OUQ:2013}. In such cases, a hybrid fuzzy-stochastic model may be considered as a feasible and accurate approach to treat the problem; see \cite{BM:16} where a fuzzy-stochastic multiscale approach is presented for fiber composite polymers.

\item[4.] Due to the non-intrusiveness of the numerical methods, the $M$ deterministic problems can be distributed and solved independently on parallel computers. 
\end{itemize}

\section{Conclusion}
\label{sec:CON}

We have introduced a new class of PDE problems with hybrid fuzzy-stochastic parameters, coined fuzzy-stochastic PDE problems. Using the level-set representation of fuzzy functions, we have defined the solution to fuzzy-stochastic PDE problems through a corresponding parametric problem and further studied the analysis and computation of such problems.

The author is currently working on the development of more efficient numerical methods for solving fuzzy-stochastic PDE problems in line with the discussion presented in Section \ref{sec:numerics_cost}. More applications involving time-dependent fuzzy-stochastic PDEs will also be explored and presented elsewhere. 
While the focus of the present work has been mainly on forward problems, the construction of a fuzzy Bayesian computation algorithm 
for the inverse propagation of hybrid uncertainties through fuzzy-stochastic PDE problems is a subject of current work and will be presented elsewhere.

\section*{Acknowledgments}
I am indebted to Ivo Babu{\v s}ka for introducing me to the field of fuzzy calculus and for our regular stimulating discussions on uncertainty quantification and modeling. I would also like to acknowledge the invaluable comments and suggestions of the SIAM/ASA JUQ editors and reviewers that have greatly improved the manuscript.

\bibliographystyle{plain}
\bibliography{refs}

\end{document}